\documentclass[12pt, a4paper]{article}
\usepackage{mathtools,amsthm,amsmath,amsfonts,amssymb,tikz-cd,enumitem, graphicx,mathrsfs,bbm,stmaryrd,xcolor}

\tikzcdset{scale cd/.style={every label/.append style={scale=#1},
    cells={nodes={scale=#1}}}}
\usepackage{authblk}
\usepackage{url}
\usepackage{esvect}
\makeatletter
\newcommand{\address}[1]{\gdef\@address{#1}}
\newcommand{\email}[1]{\gdef\@email{\url{#1}}}
\newcommand{\@endstuff}{\par\vspace{\baselineskip}\noindent\small
\begin{tabular}{@{}l}\@address\\\textit{E-mail address:} \@email\end{tabular}}
\AtEndDocument{\@endstuff}
\makeatother
\usepackage{setspace}
\usepackage[utf8]{inputenc}
\usepackage{CJKutf8}
\counterwithin{equation}{section}
\usepackage[pdfencoding=auto,psdextra,colorlinks=true,linkcolor=blue,citecolor=blue,urlcolor = orange]{hyperref}
\usepackage[margin = 1.2in]{geometry}
\usepackage{kantlipsum}
\allowdisplaybreaks
\newtheorem{theorem}{Theorem}[section]
\newtheorem{definition}[theorem]{Definition}

\newtheorem{remark}[theorem]{Remark}
\newtheorem{proposition}[theorem]{Proposition}
\newtheorem{corollary}[theorem]{Corollary}
\newtheorem{assumption}[theorem]{Assumption}
\newcommand{\citep}[1]{\cite{#1}}

\newcommand{\professor}{\text{Prof.\ }\hspace{-0.03125mm}}
\newcommand{\doctor}{\text{Dr.\ }\hspace{-0.03125mm}}
\newcommand{\edmp}{\text{\normalfont End}}

\begin{document}
\title{\textbf{Mapping cone Thom forms}}
\author{Hao Zhuang}
\address{Beijing International Center for Mathematical Research, Peking University}
\email{hzhuang@pku.edu.cn}
\date{\today}
\maketitle
\begin{abstract}
For the de Rham mapping cone cochain complex induced by a smooth closed $2$-form, we explicitly write down the associated mapping cone Thom form in the sense of Mathai-Quillen. Our construction uses the mapping cone covariant derivative, carrying the extra information brought by the $2$-form. Our main tool is the Berezin integral. As the main result, we show that this Thom form is closed with respect to the mapping cone differentiation, its integration along the fiber is $1$, and it satisfies the transgression formula. 

\end{abstract} 
\tableofcontents
\section{Introduction}
The de Rham mapping cone cochain complex associated with a smooth closed form carries the studies from various perspectives in geometry and topology. The symplectic case connects with the primitive cohomology and the filtered cohomology \cite{tanaka_tseng_2018, tty3rd, tty1st, tty2nd}. For the de Rham mapping cone cochain complex associated with a general closed form, we have seen the studies in characteristic classes \cite{symplectic_semi_char_2025, transgression_primitive_2025, 1_filtered_semi_char_2025}, gauge theory \cite{tseng_and_zhou_symplectic_flat_connection_and_twisted_primitive2022, tseng_zhou_symplectic_flat_functional_characteristic_classes2022,  tseng_and_zhou_2025mapping_yang_mills}, Morse theory \cite{tangtsengclausenmappingcone, tangtsengclausensymplecticwitten, 2026_instanton_construction_mapping_cone}, and so on. 

An important mechanism supporting the studies of characteristic classes, gauge theory, and Morse theory is the Thom isomorphism \cite[Section 6]{bott2013differential}. The analytic expression of the associated Thom form is explicitly formulated by Mathai and Quillen \cite[Theorem 4.5]{Mathai-Quillen_Thom_form}. Now, in the mapping cone case, we also expect to build such a mechanism. In \cite[Theorem 3.13]{morrison2024cohomology}, the Thom isomorphism is formulated via the topological approach. This reminds us of the necessity to give an explicit construction of the mapping cone Thom form in the sense of Mathai-Quillen. 

In this paper, we give the construction. We assume the following settings: 
\begin{assumption}\normalfont
    We let $M$ be a smooth manifold without boundary, $\omega$ be a smooth closed $2$-form on $M$, and $E$ be an oriented smooth vector bundle of rank $n$ over $M$. Also, we equip $E$ with a smooth vector bundle metric $g$. 
\end{assumption}

For the construction of the mapping cone Thom form, we follow the pattern in \cite[Section 1.6]{bgv}, \cite[Chapter 3]{wittendeformationweipingzhang}, and \cite[Chapter 3]{weipingzhangnewedition}. For the notion of the mapping cone covariant derivative, we follow \cite[Proposition 3.8]{tseng_and_zhou_symplectic_flat_connection_and_twisted_primitive2022} and \cite[Section 2.1]{tseng_and_zhou_2025mapping_yang_mills}.

With respect to $g$, we let $\nabla$ be a Euclidean connection on $E$, and let $\Phi\in\Omega^0(M,\edmp(E))$ be skew-adjoint. 
They form a mapping cone covariant derivative
\begin{align}
    \mathbb{A}: \Omega^i(M,E)\oplus\Omega^{i-1}(M,E)&\to \Omega^{i+1}(M,E)\oplus\Omega^{i}(M,E)   \nonumber \\
       (\alpha, \beta) &\mapsto (\nabla\alpha+\omega\wedge\beta, \Phi\alpha -\nabla\beta).
\end{align}
Let $\sigma: E\to M$ be the projection to the base space. We then obtain the pullback bundle $\widetilde{E} = \sigma^*E$ and the associated
\begin{align}
    \widetilde{\omega} = \sigma^*\omega,\ 
    \widetilde{\nabla} = \sigma^*\nabla,\ 
    \widetilde{\Phi} = \sigma^*\Phi,\ 
    \widetilde{g} = \sigma^*g. 
\end{align}
Then, we have the de Rham mapping cone cochain complex
\begin{align}
    d^{\widetilde{\omega}}: \Omega^i(E)\oplus\Omega^{i-1}(E)&\to\Omega^{i+1}(E)\oplus\Omega^{i}(E)\nonumber\\
    (\alpha,\beta)&\mapsto (d\alpha + \widetilde{\omega}\wedge\beta, -d\beta)
\end{align}
of $(E,\widetilde{\omega})$, and the mapping cone covariant derivative 
\begin{align}
    \widetilde{\mathbb{A}}:  \Omega^i(E, \Lambda^*\widetilde{E})\oplus\Omega^{i-1}(E, \Lambda^*\widetilde{E})&\to\Omega^{i+1}(E, \Lambda^*\widetilde{E})\oplus\Omega^{i}(E, \Lambda^*\widetilde{E})   \nonumber\\
    (\alpha, \beta) &\mapsto \left(\widetilde{\nabla}\alpha + \widetilde{\omega}\wedge\beta,\ \widetilde{\Phi}^\Lambda\alpha-\widetilde{\nabla}\beta\right)
\end{align}
on $\Lambda^*E$. Here, $\widetilde{\Phi}^\Lambda$ is the derivation (See (\ref{derivation})-(\ref{derivation common pattern})) induced by $\widetilde{\Phi}$, and $\widetilde{\nabla}$ extends naturally to $\Lambda^*\widetilde{E}$. 

Let $e_1, \cdots, e_n$ be an oriented local orthonormal frame of $E$, then we have its lift $\widetilde{e}_1, \cdots, \widetilde{e}_n$ to $\widetilde{E}$. Then, we let 
\begin{align}
    Q_{\widetilde{\mathbb{A}}} =\ & \sum_{1\leqslant i<j\leqslant n}\widetilde{g}\left(\widetilde{\nabla}^2\widetilde{e}_i + \widetilde{\omega}\wedge\widetilde{\Phi}\widetilde{e}_i,\ \widetilde{e}_j\right) \otimes(\widetilde{e}_i\wedge \widetilde{e}_j)
    \in\Omega^2(E,\Lambda^2\widetilde{E}), 
\end{align}
and 
\begin{align}
    S_{\widetilde{\mathbb{A}}} =\ & \sum_{1\leqslant i< j\leqslant n}\widetilde{g}\left(\widetilde{\Phi}\widetilde{\nabla}\widetilde{e}_i - \widetilde{\nabla}\widetilde{\Phi}\widetilde{e}_i,\ \widetilde{e}_j\right)\otimes(\widetilde{e}_i\wedge \widetilde{e}_j) \in\Omega^1(E,\Lambda^2\widetilde{E}). 
\end{align}
They are globally defined because $\nabla$ is Euclidean and $\Phi$ is skew-adjoint. 

We let $\mathbf{v}\in\Omega^0(E,\widetilde{E})$ be the tautological section (See (\ref{coordinate expression of tautological section})). Then, we define
\begin{align}
    \mathcal{A} = \left(\dfrac{1}{2}|\mathbf{v}|^2, 0\right) + \widetilde{\mathbb{A}}(\mathbf{v},0) - \left(Q_{\widetilde{\mathbb{A}}}, S_{\widetilde{\mathbb{A}}}\right).
\end{align}
By the Taylor series of $f(z) = e^z$ and the wedge product on pairs of forms, we let
\begin{align}
    \mathcal{U} = (-1)^{n(n+1)/2}\left(\dfrac{1}{2\pi}\right)^{n/2}\int^B e^{-\mathcal{A}}. 
\end{align}
Here, $\displaystyle\int^B$ means applying the Berezin integral to each form in a pair. 

By verifying the closedness and the integration along the fiber, our first main result shows that $\mathcal{U}$ is the mapping cone Thom form that we want:  
\begin{theorem}\label{main result}
    The pair $\mathcal{U}\in\Omega^n(E)\oplus\Omega^{n-1}(E)$ is $d^{\widetilde{\omega}}$-closed. Also, it satisfies
    \begin{align}
        \int_{E/M}\mathcal{U} = (1,0).
    \end{align}
    Here, $\displaystyle\int_{E/M}$ means the integration along the fiber.
\end{theorem}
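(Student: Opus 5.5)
The plan is to imitate the Mathai--Quillen argument from \cite[Section 1.6]{bgv} and \cite[Chapter 3]{weipingzhangnewedition}, with every object replaced by its mapping cone counterpart. The first thing I would record is the algebraic framework for pairs. Take the wedge product of pairs to be
\[
(\alpha,\beta)\wedge(\gamma,\delta)=\bigl(\alpha\wedge\gamma,\ \beta\wedge\gamma+(-1)^{|\alpha|}\alpha\wedge\delta\bigr),
\]
extended to $\Lambda^*\widetilde{E}$-valued forms with the total $\mathbb{Z}_2$-grading. With this convention:
\begin{enumerate}
\item $d^{\widetilde{\omega}}$ is a derivation on pairs of ordinary forms (this needs $d\widetilde{\omega}=0$, and I would check the sign convention of the paper's product against it);
\item $\widetilde{\mathbb{A}}$ is a derivation on pairs of $\Lambda^*\widetilde{E}$-valued forms, using that $\widetilde{\Phi}^\Lambda$ is a derivation;
\item the Berezin integral intertwines them:
\[
d^{\widetilde{\omega}}\int^B(\alpha,\beta)=\int^B\widetilde{\mathbb{A}}(\alpha,\beta).
\]
This holds because $\widetilde{\nabla}$ is Euclidean and $\widetilde{\Phi}^\Lambda$, being induced by a skew-adjoint endomorphism, preserves $\Lambda^n\widetilde{E}$ with zero action on the volume element.
\end{enumerate}

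Next, let $\iota_{\mathbf{v}}$ denote contraction with $\mathbf{v}$ via $\widetilde{g}$, acting componentwise on pairs; it is an odd derivation. Any term in its image has $\Lambda$-degree at most $n-1$, so $\int^B\iota_{\mathbf{v}}(\cdot)=0$. Setting $\mathbb{D}=\widetilde{\mathbb{A}}+\iota_{\mathbf{v}}$, this gives
\[
d^{\widetilde{\omega}}\int^B e^{-\mathcal{A}}=\int^B\mathbb{D}\,e^{-\mathcal{A}}.
\]
Since $\mathcal{A}$ is even and $\mathbb{D}$ is a derivation, closedness reduces to $\mathbb{D}\mathcal{A}=0$. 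I would check this term by term:
\begin{enumerate}
\item $\iota_{\mathbf{v}}$ applied to $(\tfrac12|\mathbf{v}|^2,0)$ gives $0$.
\item $\widetilde{\mathbb{A}}$ applied to $(\tfrac12|\mathbf{v}|^2,0)$ gives $(\widetilde{g}(\widetilde{\nabla}\mathbf{v},\mathbf{v}),0)$ (the $\widetilde{\Phi}$-contribution is $\widetilde{g}(\widetilde{\Phi}\mathbf{v},\mathbf{v})=0$ by skewness). This cancels against $\iota_{\mathbf{v}}\widetilde{\mathbb{A}}(\mathbf{v},0)=(\widetilde{g}(\mathbf{v},\widetilde{\nabla}\mathbf{v}),\widetilde{g}(\mathbf{v},\widetilde{\Phi}\mathbf{v}))$, up to the sign fixed by the odd/even conventions; again $\widetilde{g}(\mathbf{v},\widetilde{\Phi}\mathbf{v})=0$.
\item $\widetilde{\mathbb{A}}^2(\mathbf{v},0)$ should equal the action of the mapping cone curvature $(\widetilde{\nabla}^2+\widetilde{\omega}\widetilde{\Phi},\ \widetilde{\Phi}\widetilde{\nabla}-\widetilde{\nabla}\widetilde{\Phi})$ on $\mathbf{v}$. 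This is exactly $\iota_{\mathbf{v}}(Q_{\widetilde{\mathbb{A}}},S_{\widetilde{\mathbb{A}}})$ under the identification of skew endomorphisms with $\Lambda^2$, which is why $Q$ and $S$ are defined as they are.
\item $\widetilde{\mathbb{A}}(Q_{\widetilde{\mathbb{A}}},S_{\widetilde{\mathbb{A}}})=0$ is the mapping cone Bianchi identity. I would prove it from $\widetilde{\mathbb{A}}^3=\widetilde{\mathbb{A}}\circ\widetilde{\mathbb{A}}^2=\widetilde{\mathbb{A}}^2\circ\widetilde{\mathbb{A}}$, together with $d\widetilde{\omega}=0$ and the classical $\widetilde{\nabla}R=0$.
\end{enumerate}
The main obstacle I expect is this fourth point together with the sign bookkeeping in the third: the correct grading for the second slot of a pair has to make $\widetilde{\mathbb{A}}$ and $\iota_{\mathbf{v}}$ genuinely (anti)commuting derivations. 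I would first verify the Bianchi identity in a local frame with the connection matrix $\theta$, writing $\widetilde{\mathbb{A}}^2$ as a matrix of pairs and expanding the commutator.

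For the fiber integral, restrict to a fiber $E_x$: every pulled-back form ($\widetilde{\omega}$, the curvature, $S_{\widetilde{\mathbb{A}}}$) is horizontal. The second component of $e^{-\mathcal{A}}$ is a sum of terms each carrying either a horizontal factor ($S_{\widetilde{\mathbb{A}}}$ or $\widetilde{\omega}$) or the $0$-form factor $\widetilde{\Phi}\mathbf{v}$, multiplied by powers of the first component. A term containing a horizontal factor contributes nothing to the fiber integral. A term of the form $\widetilde{\Phi}\mathbf{v}$ times powers of the first component has vertical degree at most its total degree $n-1<n$, so it also integrates to $0$ along the fiber. Hence the second slot of $\int_{E/M}\mathcal{U}$ vanishes.

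In the first component, all terms involving $\widetilde{\omega}$, $\widetilde{\nabla}^2$ or $\theta$ are horizontal. The vertical top-degree part is therefore $(-1)^{n(n+1)/2}(2\pi)^{-n/2}e^{-|y|^2/2}\int^B e^{-\sum dy_i\,\widetilde{e}_i}$. Its Berezin integral produces $dy_1\cdots dy_n$ with the sign $(-1)^{n(n+1)/2}$ cancelling the prefactor's sign, and the Gaussian integral $\int_{\mathbb{R}^n}e^{-|y|^2/2}\,dy=(2\pi)^{n/2}$ then gives $1$. This yields $\int_{E/M}\mathcal{U}=(1,0)$.
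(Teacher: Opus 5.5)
Your plan is the paper's proof: $\int^B$ intertwines $d^{\widetilde{\omega}}$ with $\widetilde{\mathbb{A}}+\mathbf{v}\lrcorner$ (Corollary \ref{swap d and connection in berezin integral}), $(\widetilde{\mathbb{A}}+\mathbf{v}\lrcorner)\mathcal{A}=(0,0)$ follows from the skew-adjoint Bianchi identity and $\widetilde{\mathbb{A}}^2(\mathbf{v},0)=\mathbf{v}\lrcorner(Q_{\widetilde{\mathbb{A}}},S_{\widetilde{\mathbb{A}}})$, and the fiber integral is the Gaussian computation you describe. One step, however, fails as written: the contraction on pairs cannot act componentwise.

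Your product is the product of $\alpha+\epsilon\beta$ with $\epsilon$ odd and $\epsilon^2=0$, and $d^{\widetilde{\omega}}$ is $d$ with $d\epsilon=\widetilde{\omega}$. The odd derivation extending $\mathbf{v}\lrcorner$ with $\mathbf{v}\lrcorner\epsilon=0$ is then forced to be $\alpha+\epsilon\beta\mapsto\mathbf{v}\lrcorner\alpha-\epsilon\,(\mathbf{v}\lrcorner\beta)$, i.e.\ $(\alpha,\beta)\mapsto(\mathbf{v}\lrcorner\alpha,-\mathbf{v}\lrcorner\beta)$. This is exactly the paper's (\ref{extended contraction}).

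The componentwise map causes two failures:
\begin{itemize}
\item It violates the Leibniz rule in the second slot.
\item It breaks your item 3. Since $S_{\widetilde{\mathbb{A}}}$ is a $1$-form, $\mathbf{v}\lrcorner S_{\widetilde{\mathbb{A}}}=-(\widetilde{\Phi}\widetilde{\nabla}-\widetilde{\nabla}\widetilde{\Phi})\mathbf{v}$. So with componentwise contraction the second slot of $\mathbb{D}\mathcal{A}$ is $2(\widetilde{\Phi}\widetilde{\nabla}-\widetilde{\nabla}\widetilde{\Phi})\mathbf{v}=-2(\widetilde{\nabla}\widetilde{\Phi})\mathbf{v}$, which is nonzero as soon as $\nabla\Phi\neq0$.
\end{itemize}
With the sign-flipped contraction these terms cancel as in (\ref{c1})--(\ref{c5}), and your argument goes through. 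Your product itself is fine: on the even element $\mathcal{A}$ it agrees with (\ref{define the wedge product for pairs}), provided $|\delta|$ there is read as the cone degree (form degree plus one). That is the reading under which the paper's derivation argument works.

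Where you genuinely depart from the paper is the Bianchi identity $\widetilde{\mathbb{A}}(Q_{\widetilde{\mathbb{A}}},S_{\widetilde{\mathbb{A}}})=(0,0)$. The paper proves Proposition \ref{Bianchi identity another expression} by a direct computation in a local frame, while you derive it from $\widetilde{\mathbb{A}}\circ\widetilde{\mathbb{A}}^2=\widetilde{\mathbb{A}}^2\circ\widetilde{\mathbb{A}}$. That route works. Using $d\widetilde{\omega}=0$, on $\widetilde{E}$-valued pairs one has $\widetilde{\mathbb{A}}^2(\alpha,\beta)=(F\alpha,\,G\alpha+F\beta)$, with $F=\widetilde{\nabla}^2+\widetilde{\omega}\widetilde{\Phi}$ and $G=\widetilde{\Phi}\widetilde{\nabla}-\widetilde{\nabla}\widetilde{\Phi}$. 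Comparing entries yields the endomorphism-valued identities $[\widetilde{\nabla},F]+\widetilde{\omega}G=0$ and $[\widetilde{\Phi},F]-[\widetilde{\nabla},G]=0$, with graded commutators.

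What you must add is that the map $A\mapsto\sum_{i<j}g(Ae_i,e_j)\,e_i\wedge e_j$ intertwines $[\nabla,\cdot]$ with $\nabla$ and $[\Phi,\cdot]$ with $\Phi^\Lambda$ on $\Lambda^2E$. This holds because $\nabla$ is Euclidean and $\Phi$ is skew; for general $\Phi$, $\Phi^\Lambda$ acts on $\Lambda^2E$ as $A\mapsto\Phi A+A\Phi^{T}$, not as a commutator. This is the conceptual reason the identity is called skew-adjoint. Your version is shorter and shows where the hypotheses enter; the paper's is fully explicit but hides that structure.
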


Let $\nabla_t$ be a smooth family of Euclidean connections, and $\Phi_t$ be a smooth family of skew-adjoint endomorphisms ($t\in\mathbb{R}$). We have the transgression: 
\begin{theorem}\label{main result 2}
    Let $\omega$ and $g$ be fixed. For the family of mapping cone Thom forms $\mathcal{U}_t$ associated with $\nabla_t$ and $\Phi_t$ $(t\in\mathbb{R})$, we have $\psi_t, \rho_t\in\Omega^*(E)$ such that 
    \begin{align}
        \dfrac{d\hspace{+0.5mm}\mathcal{U}_t}{dt} = d^{\widetilde{\omega}}(\psi_t, \rho_t).
    \end{align}
    The precise expression of $(\psi_t, \rho_t)$ is given in Proposition \ref{proposition of type 1 transgression}.
\end{theorem}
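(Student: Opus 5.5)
Following the Mathai--Quillen pattern of \cite[Section 1.6]{bgv}, I would prove Theorem \ref{main result 2} by reducing it to closedness in a family. The idea is to work on $\mathbb{R}\times E$ and reuse Theorem \ref{main result} there, rather than differentiating $\mathcal{U}_t$ by hand.

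\textbf{Step 1: the family version.} Let $\pi:\mathbb{R}\times E\to E$ be the projection. Over $\mathbb{R}\times M$, assemble the families into a single Euclidean connection
\[\widehat{\nabla}=dt\wedge\partial_t+\nabla_t\]
on the pullback of $E$, together with the skew-adjoint endomorphism $\widehat{\Phi}=\Phi_t$. Pull $\omega$ back to $\mathbb{R}\times M$; it is still closed, and the metric $g$ is fixed. Theorem \ref{main result} applies to these data and yields a $d^{\widehat{\omega}}$-closed pair $\widehat{\mathcal{U}}$ on $\mathbb{R}\times E$.

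Since the Berezin integral and the algebraic expressions are natural, $\widehat{\mathcal{U}}$ restricts to $\mathcal{U}_t$ on each slice $\{t\}\times E$. So I decompose
\[\widehat{\mathcal{U}}=\mathcal{U}_t+dt\wedge(\psi_t,\rho_t),\]
where the second component is defined by $(\psi_t,\rho_t)=\iota_{\partial_t}\widehat{\mathcal{U}}$, evaluated with $dt$-free coefficients.

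\textbf{Step 2: extract the transgression.} Write $d_{\mathbb{R}\times E}=dt\wedge\partial_t+d_E$. The pulled-back form $\widehat{\omega}$ contains no $dt$. I then expand $d^{\widehat{\omega}}\widehat{\mathcal{U}}=0$ and keep only the components containing $dt$.

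In the first slot this gives
\[\partial_t\mathcal{U}^{(1)}_t-d\psi_t-\widehat{\omega}\wedge\rho_t=0,\]
and in the second slot
\[-\partial_t\mathcal{U}^{(2)}_t+d\rho_t=0.\]
The signs come from commuting $dt$ past odd-degree objects, and I would need to track them carefully. With the correct signs (possibly after replacing $\rho_t$ by $-\rho_t$), these two equations say exactly that
\[\frac{d\,\mathcal{U}_t}{dt}=d^{\widetilde{\omega}}(\psi_t,\rho_t).\]

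\textbf{Step 3: explicit formula.} To match Proposition \ref{proposition of type 1 transgression}, I compute $\iota_{\partial_t}\widehat{\mathcal{A}}$.

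\emph{Curvature.} $\widehat{\nabla}^2$ has $dt$-part $dt\wedge\dot{\nabla}_t$, so $Q$ contributes $\sum_{i<j}\widetilde g(\dot\nabla_t\widetilde e_i,\widetilde e_j)\,\widetilde e_i\wedge\widetilde e_j$.

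\emph{The $S$ term.} $S$ contributes the terms involving $\dot\Phi_t$.

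\emph{The tautological term.} $\widehat{\nabla}\mathbf v$ contributes nothing, because $\mathbf{v}$ does not depend on $t$.

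Then $\psi_t$ and $\rho_t$ are the Berezin integrals of $\iota_{\partial_t}\widehat{\mathcal{A}}$ times $e^{-\mathcal{A}_t}$, up to sign and the normalizing constant. This uses the fact that $\iota_{\partial_t}$ is a derivation and $\iota_{\partial_t}e^{-\widehat{\mathcal{A}}}=-(\iota_{\partial_t}\widehat{\mathcal{A}})e^{-\widehat{\mathcal{A}}}$, in the mapping cone wedge product.

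\textbf{Main obstacle.} The delicate point is that the mapping cone wedge product on pairs is not the naive one. I therefore have to check that $\iota_{\partial_t}$, or rather splitting off $dt$, is compatible with it. One concern is that the second slot of $\widehat{\mathcal{A}}$ has a $dt$-component coming from $\dot\Phi_t$-type terms in $S$, which might interact with $\widehat\omega$. If the compatibility fails, I would instead differentiate $\mathcal{U}_t$ directly. Writing $\frac{d}{dt}e^{-\mathcal{A}_t}=-\dot{\mathcal{A}}_te^{-\mathcal{A}_t}$, I would show that $\dot{\mathcal{A}}_t=[\widetilde{\mathbb{A}}_t,\,\cdot\,]$ applied to the endomorphism pair $(\dot\nabla_t,\dot\Phi_t)$, using the Bianchi-type identity from the proof of Theorem \ref{main result}. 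Commuting the supertrace-like Berezin integral with $\widetilde{\mathbb{A}}_t$, which turns it into $d^{\widetilde\omega}$, would then give the result.
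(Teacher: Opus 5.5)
Your approach is correct and genuinely different from the paper's. The paper stays on $E$. It proves $\mathbb{A}_t(Y_{\mathbb{A}_t},Z_{\mathbb{A}_t})=(\dot{Q}_{\mathbb{A}_t},\dot{S}_{\mathbb{A}_t})$ by a frame computation and deduces (\ref{key point almost like the bianchi identity}), i.e.\ $\dot{\mathcal{A}}_t=-(\widetilde{\mathbb{A}}_t+\mathbf{v}\lrcorner)(Y_{\widetilde{\mathbb{A}}_t},Z_{\widetilde{\mathbb{A}}_t})$. It then combines $(\widetilde{\mathbb{A}}_t+\mathbf{v}\lrcorner)\mathcal{A}_t=(0,0)$, the derivation property and Corollary \ref{swap d and connection in berezin integral}; this is exactly your fallback.

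Your family argument instead gets existence from Theorem \ref{main result} on $\mathbb{R}\times E$ with no new identity. Steps 1--2 use only that theorem as a black box, naturality under $i_t^*$, and the scalar calculus of $d^{\widehat\omega}$, so they never manipulate the pair product. It also explains where the paper's lemmas come from. Since $Q_{\widehat{\mathbb{A}}}=Q_{\mathbb{A}_t}+dt\wedge Y_{\mathbb{A}_t}$ and $S_{\widehat{\mathbb{A}}}=S_{\mathbb{A}_t}-dt\,Z_{\mathbb{A}_t}$, the $dt$-part of Proposition \ref{Bianchi identity another expression} for $\widehat{\mathbb{A}}$ is precisely (\ref{connection on connection identified forms}). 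Likewise, the $dt$-part of $(\widehat{\mathbb{A}}+\mathbf{v}\lrcorner)\widehat{\mathcal{A}}=(0,0)$ is (\ref{key point almost like the bianchi identity}).

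The price is Step 3. Matching $\iota_{\partial_t}\widehat{\mathcal{U}}$ with the formula of Proposition \ref{proposition of type 1 transgression} requires commuting the $dt$-part of $\widehat{\mathcal{A}}$ past $\mathcal{A}_t$ in the pair product. The paper tacitly needs the same commutation when it writes $\frac{d}{dt}e^{-\mathcal{A}_t}=-\dot{\mathcal{A}}_t\wedge e^{-\mathcal{A}_t}$, so your route is no weaker than the paper's.

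Two sign points. First, your first-slot equation should read $\partial_t\mathcal{U}^{(1)}_t-d\psi_t+\widetilde{\omega}\wedge\rho_t=0$, because $dt$ anticommutes with $d$ but commutes with the $2$-form $\widetilde\omega$. Together with $\partial_t\mathcal{U}^{(2)}_t=d\rho_t$, this gives $\dot{\mathcal{U}}_t=d^{\widetilde\omega}(\psi_t,-\rho_t)$. With the signs you displayed, neither $(\psi_t,\rho_t)$ nor $(\psi_t,-\rho_t)$ would work.

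Second, the flip disappears if you encode $(\alpha,\beta)$ as $\alpha+\theta\beta$ with $\theta$ odd, $\theta^2=0$, $d\theta=\widetilde\omega$. Then $\widehat{\mathcal{A}}=\mathcal{A}_t-dt\,(Y_{\widetilde{\mathbb{A}}_t}+\theta Z_{\widetilde{\mathbb{A}}_t})$, and the correction term is even and squares to zero. Hence $e^{-\widehat{\mathcal{A}}}=e^{-\mathcal{A}_t}+dt\,(Y_{\widetilde{\mathbb{A}}_t}+\theta Z_{\widetilde{\mathbb{A}}_t})\,e^{-\mathcal{A}_t}$, and the $dt$-part of $d^{\widehat\omega}\widehat{\mathcal{U}}=0$ is exactly (\ref{transgression type 1}). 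This commutation is automatic for the product induced by the encoding, which is the one for which $d^{\widetilde\omega}$ is a derivation. Its cross term carries the sign $(-1)^{|\alpha|+|x|(|\delta|+1)}$. Comparing this with (\ref{define the wedge product for pairs}) is the precise form your ``main obstacle'' takes.
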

Comparing with \cite[Section 1.6]{bgv}, \cite[Chapter 3]{wittendeformationweipingzhang}, \cite[Chapter 3]{weipingzhangnewedition}, in the verification of Theorem \ref{main result} and Theorem \ref{main result 2}, the extra terms brought by $\omega$ and $\Phi$ cause the main difficulties. We need $\Phi$ to be skew-adjoint to approach extra terms. This make $\mathbb{A}$ an analogue of the Euclidean connection.

The paper is organized in this order: In Section \ref{section of mappning cone covariant derivative}, we review the mapping cone covariant derivative and its extension to exterior bundle valued forms. In Section \ref{section of bianchi identity}, we prove a skew-adjoint version of the mapping cone Bianchi identity, which is needed for Theorem \ref{main result}. In Section \ref{section of berezin integral}, we recall the Berezin integral and extend it to the mapping cone situation. In Section \ref{section of mapping cone thom}, we use the extended Berezin integral to construct $\mathcal{U}$ and prove Theorem \ref{main result}. In Section \ref{section of transgression}, we prove Theorem \ref{main result 2} and present some inspirations related to this study. 

\section*{Acknowledgments}
I thank \professor Xiaobo Liu, \professor Xiang Tang, \professor Li-Sheng Tseng, \professor Shanwen Wang, \doctor Jinxuan Chen, \doctor David Clausen, \doctor Danhua Song, and \doctor Shitan Xu for helpful discussions. Also, I thank Beijing International Center for Mathematical Research for providing an excellent working environment.

\section{Mapping cone covariant derivative}\label{section of mappning cone covariant derivative}
In this section, we review the mapping cone covariant derivative on vector bundle valued differential forms. At present, we do not require $\nabla$ to be Euclidean or $\Phi$ to be skew-adjoint. 

Let $E$ be a smooth vector bundle over $M$. Given a Koszul connection $\nabla$ on $E$, it induces the covariant derivative \cite[(1.2)]{wittendeformationweipingzhang}
\begin{align}
    \nabla: \Omega^i(M,E)&\to\Omega^{i+1}(M,E)\ \ (0\leqslant i\leqslant \dim M)
\end{align}
on $E$-valued forms. This $\nabla$ is determined by 
\begin{align}
   \nabla(\eta\otimes v)  = d\eta \otimes v + (-1)^{i}\eta\wedge\nabla v
\end{align}
for any $\eta\in\Omega^i(M)$ and any smooth section $v$ of $E$.

We let 
\begin{align}
    \Lambda^*E \coloneqq \bigoplus_{k = 0}^n\Lambda^k E
\end{align}
be the exterior bundle of $E$. Then, the connection $\nabla$ extends to $\Lambda^*E$: 
\begin{align}
    \nabla: \Gamma(TM)\times\Gamma(\Lambda^*E) &\to \Gamma(\Lambda^*E).
\end{align}
Here, $\Gamma(\Lambda^*E)$ (resp. $\Gamma(TM)$) is the space of smooth sections of $\Lambda^*E$ (resp. $TM$). Locally, when we have a smooth vector field $X$ on $M$, a smooth function $f$ on $M$, and a local frame
    $e_1, e_2, \cdots, e_n$
of $E$, then for $0\leqslant i_1<\cdots<i_r\leqslant n$, 
\begin{align}
    & \nabla_{X}(f e_{i_1}\wedge\cdots\wedge e_{i_r})  \nonumber \\
    =\ & X(f) e_{i_1}\wedge\cdots\wedge e_{i_r} + f\sum_{j = 1}^r e_{i_1}\wedge\cdots\wedge\nabla_X e_{i_j}\wedge\cdots\wedge e_{i_r}. 
\end{align}
Immediately, we obtain the associated covariant derivative
\begin{align}\label{derivative on exterior bundle}
    \nabla: \Omega^i(M,\Lambda^*E)&\to\Omega^{i+1}(M,\Lambda^*E) \ \ (0\leqslant i\leqslant \dim M)
\end{align}
determined by 
\begin{align}
    & \nabla(\alpha\otimes (e_{i_1}\wedge\cdots\wedge e_{i_r})) \nonumber \\
    =\ & d\alpha\otimes (e_{i_1}\wedge\cdots\wedge e_{i_r}) + (-1)^{i}\alpha\wedge\nabla(e_{i_1}\wedge\cdots\wedge e_{i_r})
\end{align}
for all $\alpha\in\Omega^i(M)$. 

Now, we generalize the above machinery for studying the de Rham mapping cone cochain complex. We start with the mapping cone covariant derivative proposed in \cite[Proposition 3.8]{tseng_and_zhou_symplectic_flat_connection_and_twisted_primitive2022} and \cite[Section 2.1]{tseng_and_zhou_2025mapping_yang_mills}. For the mapping cone superconnection defined on a superbundle, see \cite[Definition 1.3]{transgression_primitive_2025}. 
\begin{definition}\label{extended superconnection}
    \normalfont Suppose that $\Phi\in\Omega^0(M,\edmp(E))$. We call the linear map
\begin{align}
    \mathbb{A}: \Omega^i(M,E)\oplus\Omega^{i-1}(M,E)&\to \Omega^{i+1}(M,E)\oplus\Omega^{i}(M,E)   \nonumber \\
       (\alpha, \beta) &\mapsto (\nabla\alpha+\omega\wedge\beta, \Phi\alpha -\nabla\beta)
\end{align}
a mapping cone covariant derivative.  
\end{definition}

The $\Phi$ preserves the $C^\infty(M)$-module structure on the space of smooth sections of $E$. By \cite[Chapter III, \S 10.9, Proposition 14 \& (36)]{bourbaki_algebra_1}, it induces a derivation 
\begin{align}\label{derivation}
    \Phi^\Lambda\in\Omega^0(M,\edmp\left(\Lambda^*E\right)).
\end{align}
This derivation is defined as follows. Suppose that 
    $e_1, \cdots, e_n$
is a local frame of $E$. Then, for any 
\begin{align}
   a = \sum_{r = 0}^n\sum_{1\leqslant i_1<\cdots<i_r\leqslant n} a_{i_1\cdots i_r}e_{i_1}\wedge\cdots\wedge e_{i_r}
\end{align}
where $a_{i_1\cdots i_r}$'s are locally defined smooth functions, we have 
\begin{align}\label{derivation common pattern}
    \Phi^\Lambda a = \sum_{r = 1}^n\sum_{1\leqslant i_1<\cdots<i_r\leqslant n} \sum_{j = 1}^r a_{i_1\cdots i_r}e_{i_1}\wedge\cdots\wedge\Phi(e_{i_j})\wedge\cdots\wedge e_{i_r}.
\end{align}
The number $r$ in (\ref{derivation common pattern}) starts from $1$ because as a derivation compatible with the $C^\infty(M)$-module structure, $\Phi^\Lambda$ must be $0$ on $\Omega^0(M,\Lambda^0E) = C^\infty(M)$. 

With (\ref{derivative on exterior bundle}) and (\ref{derivation}), we have the following extension of $\mathbb{A}$: 
\begin{align}
    \mathbb{A}: \Omega^i(M,\Lambda^*E)\oplus\Omega^{i-1}(M,\Lambda^*E)&\to \Omega^{i+1}(M,\Lambda^*E)\oplus\Omega^{i}(M,\Lambda^*E)  \nonumber\\
    (\alpha, \beta)&\mapsto (\nabla\alpha + \omega\wedge\beta, \Phi^\Lambda\alpha - \nabla\beta).
\end{align}
Then, we define the analogue of the classical Euclidean connection: 
\begin{definition}\normalfont
    If with respect to some smooth vector bundle metric on $E$, the connection $\nabla$ is Euclidean, and the $\Phi$ is skew-adjoint, then we call $\mathbb{A}$ a Euclidean mapping cone covariant derivative under this metric.
\end{definition}

\section{Skew-adjoint Bianchi identity}\label{section of bianchi identity}
In this section, we prove the skew-adjoint Bianchi identity of $\mathbb{A}$. 
We assume that $E$ is an oriented smooth vector bundle of rank $n$ over $M$. Also, we equip $E$ with the smooth vector bundle metric $g$ and let $\mathbb{A}$ be Euclidean under $g$, i.e., $\nabla$ is Euclidean and $\Phi$ is skew-adjoint with respect to $g$.

Let $e_1,\cdots, e_n$ be an oriented local orthonormal frame of $E$. Then, we define 
\begin{align}\label{definition of q}
    Q_{\mathbb{A}} = \sum_{1\leqslant i<j\leqslant n} g\left({\nabla}^2{e}_i + {\omega}\wedge{\Phi}{e}_i,\ {e}_j\right)\otimes({e}_i\wedge {e}_j)
    \in\Omega^2(M,\Lambda^2{E})
\end{align}
and
\begin{align}\label{definition of s}
    S_{\mathbb{A}} = \sum_{1\leqslant i< j\leqslant n} g\left({\Phi}{\nabla}{e}_i - {\nabla}{\Phi}{e}_i,\ {e}_j\right)\otimes({e}_i\wedge{e}_j)\in\Omega^1(M,\Lambda^2{E}).
\end{align}
The $Q_{\mathbb{A}}$ and $S_{\mathbb{A}}$ are globally defined since $\mathbb{A}$ is Euclidean (cf. \cite[(3.9)]{wittendeformationweipingzhang}). 

We have the following skew-adjoint Bianchi identity. 
\begin{proposition}\label{Bianchi identity another expression}
    When $\mathbb{A}$ is Euclidean under $g$, we have  
    \begin{align}\label{a acts on q and s}
        {\mathbb{A}}\left(Q_{{\mathbb{A}}}, S_{{\mathbb{A}}}\right) = (0,0).
    \end{align}
\end{proposition}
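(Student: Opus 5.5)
The plan is to translate the identity into a statement about endomorphism-valued forms, where it reduces to the classical Bianchi identity together with $d\omega=0$. Consider the bundle isomorphism
\begin{align*}
    \tau:\ \mathfrak{so}(E)\to\Lambda^2E,\qquad B\mapsto \sum_{1\leqslant i<j\leqslant n} g(Be_i,e_j)\, e_i\wedge e_j ,
\end{align*}
extended to $\Omega^*(M,\mathfrak{so}(E))\to\Omega^*(M,\Lambda^2E)$ by acting on the bundle part. It is independent of the oriented orthonormal frame. Let $\nabla^2$ denote the curvature $R$ of $\nabla$ as an $\edmp(E)$-valued $2$-form, and let $\nabla\Phi=[\nabla,\Phi]\in\Omega^1(M,\edmp(E))$ be the induced covariant derivative of $\Phi$. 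Then
\begin{align*}
    Q_{\mathbb{A}}=\tau\left(R+\omega\wedge\Phi\right),\qquad S_{\mathbb{A}}=\tau\left(-\nabla\Phi\right),
\end{align*}
because $\Phi\nabla e_i-\nabla(\Phi e_i)=-(\nabla\Phi)e_i$. All of $R$, $\omega\wedge\Phi$ and $\nabla\Phi$ are skew-adjoint, since $\nabla$ is Euclidean and $\Phi$ is skew-adjoint.

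\textbf{Step 1: two compatibility properties of $\tau$.} For skew-adjoint $\Phi$ and Euclidean $\nabla$, and for $B\in\Omega^k(M,\mathfrak{so}(E))$, I will show
\begin{align*}
    \text{(a)}\ \ \nabla\,\tau(B)=\tau\left([\nabla,B]\right),\qquad \text{(b)}\ \ \Phi^\Lambda\tau(B)=\tau\left([\Phi,B]\right).
\end{align*}
Here $[\nabla,B]$ is the graded commutator, i.e.\ the induced covariant derivative on $\edmp(E)$-valued forms. Property (b) is pointwise linear algebra. Write $B=\sum_{i<j}B_{ij}\,e_i\wedge e_j$ with $B_{ij}=g(Be_i,e_j)$, apply the derivation rule (\ref{derivation common pattern}), expand $\Phi e_i=\sum_k g(\Phi e_i,e_k)e_k$, and use $g(\Phi e_i,e_k)=-g(e_i,\Phi e_k)$ to compare coefficients with $g([\Phi,B]e_i,e_j)$. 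Property (a) follows from the same computation with $\Phi$ replaced by the connection $1$-forms $g(\nabla e_i,e_k)$, which are skew because $\nabla$ is Euclidean. The extra term $d B_{ij}$ matches the derivative of the coefficient functions. Equivalently, $\tau$ is a parallel bundle map between $\mathfrak{so}(E)$ and $\Lambda^2E$.

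\textbf{Step 2: the first component.} By Step 1(a), the first component of $\mathbb{A}(Q_{\mathbb{A}},S_{\mathbb{A}})$ equals $\tau$ applied to
\begin{align*}
    [\nabla,R]+[\nabla,\omega\wedge\Phi]-\omega\wedge\nabla\Phi
    = 0 + d\omega\wedge\Phi+\omega\wedge\nabla\Phi-\omega\wedge\nabla\Phi .
\end{align*}
Here $[\nabla,R]=0$ is the classical Bianchi identity, the sign in $[\nabla,\omega\wedge\Phi]$ comes from $\omega$ having even degree, and $d\omega=0$. So the first component vanishes.

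\textbf{Step 3: the second component.} By Step 1(b) and Step 1(a), the second component $\Phi^\Lambda Q_{\mathbb{A}}-\nabla S_{\mathbb{A}}$ equals $\tau$ applied to
\begin{align*}
    [\Phi,R]+\omega\wedge[\Phi,\Phi]+[\nabla,[\nabla,\Phi]] .
\end{align*}
We have $[\Phi,\Phi]=0$. The graded Jacobi identity, for $\nabla$ of odd degree and $\Phi$ of degree $0$, gives $[\nabla,[\nabla,\Phi]]=[\nabla^2,\Phi]=[R,\Phi]$. This cancels $[\Phi,R]$, so the second component vanishes as well.

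The main obstacle is Step 1: getting every sign and convention consistent. This includes the ordering $g(Be_i,e_j)$ versus $g(e_i,Be_j)$ in $\tau$, the sign $(-1)^i$ in the extension of $\nabla$ to form-valued sections, and checking that $\Phi^\Lambda$ on $\Lambda^2E$ really corresponds to $+[\Phi,\cdot]$ rather than its negative. I would first verify (b) directly in rank $n=2$ and $n=3$ to pin down the signs. If a sign turned out reversed, I would check that the same reversal occurs in both (a) and (b), so that the cancellations in Steps 2 and 3 are unaffected.
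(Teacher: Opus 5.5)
Your proof is correct, but it takes a different route from the paper. The paper stays in a local oriented orthonormal frame. It writes $\nabla e_j=\eta_{ij}\otimes e_i$, $\nabla^2e_j=R_{ij}\otimes e_i$, $\Phi e_j=\varphi_{ij}e_i$, and expands both components of $\mathbb{A}(Q_{\mathbb{A}},S_{\mathbb{A}})$ in the basis $e_i\wedge e_j$. It then cancels terms by hand, using the antisymmetry of $\eta_{ij},R_{ij},\varphi_{ij}$, the structure equation $R_{ij}=d\eta_{ij}+\eta_{ik}\wedge\eta_{kj}$, and the component Bianchi identity for $dR_{ij}$.

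You absorb all of that bookkeeping into the two intertwining properties of $\tau$. After that, $(Q_{\mathbb{A}},S_{\mathbb{A}})=\tau(\nabla^2+\omega\wedge\Phi,\,-[\nabla,\Phi])$, and the claim reduces to $[\nabla,\nabla^2]=0$, $d\omega=0$, and $[\nabla,[\nabla,\Phi]]=[\nabla^2,\Phi]$.

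Step 1 holds exactly as stated, with plus signs. Property (a) holds because $\tau(B)=\tfrac12\sum_{i,j}g(Be_i,e_j)\,e_i\wedge e_j$ is a $g$-contraction and $\nabla$ is metric. For (b), write $Be_i=\sum_k b_{ki}e_k$. Both $\Phi^\Lambda\tau(B)$ and $\tau([\Phi,B])$ equal $\tfrac12\sum_{i,j}c_{ij}\,e_i\wedge e_j$ with $c_{ij}=\sum_k(\varphi_{ik}b_{jk}+\varphi_{jk}b_{ki})$, the second after using $\varphi_{ki}=-\varphi_{ik}$. So your fallback is not needed. It also would not have been automatic: a sign flip in (b) alone would spoil Step 3.

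The paper's computation is elementary and self-contained, but long and sign-heavy. It also uses $d\omega=0$ only tacitly: the term $\varphi_{ji}\,d\omega$ is silently dropped in its first display. Your version buys three things:
\begin{itemize}
\item It explains the identity as the ordinary Bianchi identity of the mapping cone curvature, transported to $\Lambda^2E$.
\item It shows exactly where the Euclidean hypotheses enter. For non-skew $\Phi$ one only has $\Phi^\Lambda\tau(B)=\tau(\Phi B+B\Phi^{*})$, so (b) fails.
\item Combined with $\tfrac{d}{dt}\nabla_t^2=[\nabla_t,\tfrac{d}{dt}\nabla_t]$, the same two lemmas give the paper's later identity for $\mathbb{A}_t(Y_{\mathbb{A}_t},Z_{\mathbb{A}_t})$ in a couple of lines.
\end{itemize}
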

\begin{proof}
Temporarily in this proof, we use the Einstein summation convention. Since 
    \begin{align}\label{curvature with connection}
          \mathbb{A}\left(Q_{{\mathbb{A}}}, S_{{\mathbb{A}}}\right)
        =\ &\left({\nabla}Q_{{\mathbb{A}}} + {\omega}\wedge S_{{\mathbb{A}}}, {\Phi}^\Lambda Q_{{\mathbb{A}}} - {\nabla} S_{{\mathbb{A}}}\right),
    \end{align}
we need to show that both components in (\ref{curvature with connection}) equal to $0$. 

Let $e_1, \cdots, e_n$ be an oriented local orthonormal frame of $E$. Then, we let
\begin{align}
    \nabla e_j =\ & \eta_{ij}\otimes e_i,\\
    \nabla^2 e_j =\ & R_{ij}\otimes e_i,\\
    \Phi e_j =\ &\varphi_{ij}e_i.
\end{align}
Here, $\eta_{ij}$ is a $1$-form, $R_{ij}$ is a $2$-form, and $\varphi_{ij}$ is a function. They satisfy
\begin{align}\label{skew-adjoint relations}
    \eta_{ij} = -\eta_{ji},\ R_{ij} = -R_{ji},\ \varphi_{ij} = -\varphi_{ji}
\end{align}
because $\mathbb{A}$ is Euclidean. Also, we have the structural identity \cite[Theorem 11.1]{loringtu2017differential}
\begin{align}\label{structural identity}
    R_{ij} =\ & d\eta_{ij} + \eta_{ik}\wedge\eta_{kj}
\end{align}
and the Bianchi identity \cite[Proposition 22.3]{loringtu2017differential}
\begin{align}\label{bianchi identity second one classical}
    d R_{ij} =\ & R_{ik}\wedge\eta_{kj} - \eta_{ik}\wedge R_{kj}.
\end{align}
Now, we see that
\begin{align}\label{first part first}
     & 2\left({\nabla}Q_{{\mathbb{A}}} + {\omega}\wedge S_{{\mathbb{A}}}\right) \nonumber \\
    =\ & \nabla\left(g(\nabla^2 e_i + \omega\wedge\Phi e_i, e_j)\otimes(e_i\wedge e_j)\right) + \omega\wedge g(\Phi\nabla e_i - \nabla\Phi e_i, e_j)\otimes(e_i\wedge e_j)\nonumber \\
    =\ & \nabla\left((R_{ji}+\varphi_{ji}\omega)\otimes(e_i\wedge e_j)\right) + \omega\wedge\left(\varphi_{jk}\eta_{ki} - d\varphi_{ji} - \varphi_{ki}\eta_{jk}\right)\otimes(e_i\wedge e_j)  \nonumber\\
    =\ & dR_{ji}\otimes(e_i\wedge e_j) + (R_{ji}\wedge\eta_{ki} + \varphi_{ji}\omega\wedge\eta_{ki})\otimes(e_k\wedge e_j)  +  (R_{ji}\wedge\eta_{kj} + \varphi_{ji}\omega\wedge\eta_{kj})\otimes(e_i\wedge e_k) \nonumber \\ 
    & + (\varphi_{jk}\omega\wedge\eta_{ki} - \varphi_{ki}\omega\wedge\eta_{jk})\otimes(e_i\wedge e_j)
\end{align}
Since $\eta_{ik} = -\eta_{ki}$, we have
\begin{align}\label{first part 2nd}
    & \varphi_{ji}\eta_{ki}\otimes(e_k\wedge e_j) + \varphi_{ji}\eta_{kj}\otimes(e_i\wedge e_k) + (\varphi_{jk}\eta_{ki} - \varphi_{ki}\eta_{jk})\otimes(e_i\wedge e_j)  \nonumber\\
    =\ & \varphi_{jk}\eta_{ik}\otimes(e_i\wedge e_j) + \varphi_{ki}\eta_{jk}\otimes(e_i\wedge e_j) + (\varphi_{jk}\eta_{ki} - \varphi_{ki}\eta_{jk})\otimes(e_i\wedge e_j) \nonumber\\
    =\ & 0. 
\end{align}
In addition, by (\ref{bianchi identity second one classical}) and $\eta_{ik} = -\eta_{ki}$, we have
\begin{align}\label{first part 3rd}
    & dR_{ji}\otimes(e_i\wedge e_j) + (R_{ji}\wedge\eta_{ki})\otimes(e_k\wedge e_j) + (R_{ji}\wedge\eta_{kj})\otimes(e_i\wedge e_k)  \nonumber\\
 =\ & dR_{ji}\otimes(e_i\wedge e_j) + (R_{jk}\wedge\eta_{ik})\otimes(e_i\wedge e_j) + (R_{ki}\wedge\eta_{jk})\otimes(e_i\wedge e_j)  \nonumber\\
 =\ & 0. 
\end{align}
By (\ref{first part first}), (\ref{first part 2nd}), and (\ref{first part 3rd}), we obtain
\begin{align}
    {\nabla}Q_{{\mathbb{A}}} + {\omega}\wedge S_{{\mathbb{A}}} = 0. 
\end{align}

Next, we find that 
\begin{align}\label{something we need to prove}
      & 2\left(\Phi^\Lambda Q_{\mathbb{A}} - \nabla S_{\mathbb{A}}\right)  \nonumber\\
    =\ & g(\nabla^2 e_i + \omega\wedge\Phi e_i, e_j)\otimes\Phi^\Lambda(e_i\wedge e_j) - \nabla\left(g(\Phi\nabla e_i - \nabla\Phi e_i, e_j)\otimes(e_i\wedge e_j)\right)  \nonumber\\
    =\ & (R_{ji} + \varphi_{ji}\omega)\otimes(\varphi_{ki}e_k\wedge e_j + \varphi_{k j} e_i\wedge e_k) - \nabla\left((\varphi_{jk}\eta_{ki} - d\varphi_{ji} - \varphi_{ki}\eta_{ji})\otimes(e_i\wedge e_j)\right) \nonumber \\
    =\ & (R_{ji} + \varphi_{ji}\omega)\otimes(\varphi_{ki}e_k\wedge e_j + \varphi_{k j} e_i\wedge e_k) - d(\varphi_{jk}\eta_{ki} - d\varphi_{ji} - \varphi_{ki}\eta_{jk})\otimes(e_i\wedge e_j) \nonumber\\
    & + (\varphi_{jk}\eta_{ki} - d\varphi_{ji} - \varphi_{ki}\eta_{jk})\wedge\eta_{ri}\otimes(e_r\wedge e_j) + (\varphi_{jk}\eta_{ki} - d\varphi_{ji} - \varphi_{ki}\eta_{jk})\wedge\eta_{rj}\otimes(e_i\wedge e_r). 
\end{align}
By $\varphi_{ik} = -\varphi_{ki}$, we have 
\begin{align}\label{remove 1}
    & \varphi_{ji}\omega\otimes(\varphi_{ki}e_k\wedge e_j + \varphi_{k j} e_i\wedge e_k) \nonumber\\
    =\ & \varphi_{ji}\varphi_{ki}\omega\otimes(e_k\wedge e_j) + \varphi_{ji}\varphi_{kj}\omega\otimes(e_i\wedge e_k)  \nonumber\\
    =\ & \varphi_{jk}\varphi_{ik}\omega\otimes(e_i\wedge e_j) + \varphi_{ki}\varphi_{jk}\omega\otimes(e_i\wedge e_j) \nonumber\\
    =\ & 0.
\end{align}
By $\eta_{ki} = -\eta_{ik}$, we have 
\begin{align}\label{remove 2}
    & d(\varphi_{jk}\eta_{ki} - d\varphi_{ji} - \varphi_{ki}\eta_{jk})\otimes(e_i\wedge e_j) + d\varphi_{ji}\wedge\left(\eta_{ri}\otimes(e_r\wedge e_j) +  \eta_{rj}\otimes(e_i\wedge e_r)\right) \nonumber \\
    =\ & d(\varphi_{jk}\eta_{ki} - \varphi_{ki}\eta_{jk})\otimes(e_i\wedge e_j) + (d\varphi_{jr}\wedge\eta_{ir})\otimes(e_i\wedge e_j) +  (d\varphi_{ri}\wedge\eta_{jr})\otimes(e_i\wedge e_j)   \nonumber \\ 
    =\ & \left(d\varphi_{jk}\wedge\eta_{ki} + \varphi_{jk}d\eta_{ki} - d\varphi_{ki}\wedge\eta_{jk} - \varphi_{ki}d\eta_{jk} + d\varphi_{jk}\wedge\eta_{ik} + d\varphi_{ki}\wedge\eta_{jk}\right)\otimes(e_i\wedge e_j)   \nonumber\\
    =\ & (\varphi_{jk}d\eta_{ki} - \varphi_{ki}d\eta_{jk})\otimes(e_i\wedge e_j). 
\end{align}
By (\ref{structural identity}), (\ref{something we need to prove}), (\ref{remove 1}), (\ref{remove 2}), $\varphi_{ki} = -\varphi_{ik}$, and $\eta_{ki} = -\eta_{ik}$, we have 
\begin{align}\label{final step}
      & 2\left(\Phi^\Lambda Q_{\mathbb{A}} - \nabla S_{\mathbb{A}}\right)  \nonumber\\
    =\ & R_{ji}\otimes(\varphi_{ki}e_k\wedge e_j + \varphi_{k j} e_i\wedge e_k) - (\varphi_{jk}d\eta_{ki} - \varphi_{ki}d\eta_{jk})\otimes(e_i\wedge e_j)\nonumber\\
    & + (\varphi_{jk}\eta_{ki} - \varphi_{ki}\eta_{jk})\wedge\eta_{ri}\otimes(e_r\wedge e_j) + (\varphi_{jk}\eta_{ki} - \varphi_{ki}\eta_{jk})\wedge\eta_{rj}\otimes(e_i\wedge e_r)  \nonumber\\
    =\ & \varphi_{ik}R_{jk}\otimes(e_i\wedge e_j) + \varphi_{jk}R_{ki}\otimes(e_i\wedge e_j) - (\varphi_{jk}d\eta_{ki} - \varphi_{ki}d\eta_{jk})\otimes(e_i\wedge e_j)\nonumber\\
    & + (\varphi_{jk}\eta_{kr} - \varphi_{kr}\eta_{jk})\wedge\eta_{ir}\otimes(e_i\wedge e_j) + (\varphi_{rk}\eta_{ki} - \varphi_{ki}\eta_{rk})\wedge\eta_{jr}\otimes(e_i\wedge e_j)  \nonumber\\
    =\ & (\varphi_{ik}R_{jk} + \varphi_{ki}d\eta_{jk} - \varphi_{ki}\eta_{rk}\wedge\eta_{jr})\otimes(e_i\wedge e_j)   \nonumber\\
   & +  (\varphi_{jk}R_{ki}-\varphi_{jk}d\eta_{ki}+\varphi_{jk}\eta_{kr}\wedge\eta_{ir})\otimes(e_i\wedge e_j)   \nonumber\\
  & +  (-\varphi_{kr}\eta_{jk}\wedge\eta_{ir} + \varphi_{rk}\eta_{ki}\wedge\eta_{jr})\otimes(e_i\wedge e_j)   \nonumber\\
  =\ & -\varphi_{kr}\eta_{jk}\wedge\eta_{ir}\otimes(e_i\wedge e_j) + \varphi_{rk}\eta_{ki}\wedge\eta_{jr}\otimes(e_i\wedge e_j)   \nonumber\\
  =\ & -\varphi_{kr}\eta_{jk}\wedge\eta_{ir}\otimes(e_i\wedge e_j) + \varphi_{kr}\eta_{ri}\wedge\eta_{jk}\otimes(e_i\wedge e_j)  \nonumber\\
  =\ & 0. 
\end{align}
Thus, by (\ref{final step}), we have 
\begin{align}
    \Phi^\Lambda Q_{\mathbb{A}} - \nabla S_{\mathbb{A}} = 0.
\end{align}
The proof of the proposition is complete. 
\end{proof}

\begin{remark}\normalfont 
    The proof of Proposition \ref{Bianchi identity another expression} relies on (\ref{skew-adjoint relations}). This is why we call (\ref{a acts on q and s}) the skew-adjoint Bianchi identity. See \cite[Proposition 2.5]{transgression_primitive_2025} for the Bianchi identity of a general mapping cone covariant derivative. 
\end{remark}

\section{Berezin integral of pairs}\label{section of berezin integral}
We continue with the settings of $E$ and $\mathbb{A}$ from Section \ref{section of bianchi identity}. In this section, we study the Berezin integral and extend it to pairs of bundle-valued differential forms. 

Based on \cite[(1.28)]{bgv}, \cite[(3.6)]{wittendeformationweipingzhang}, and \cite[(3.6)]{weipingzhangnewedition}, 
we begin with the Berezin integral 
\begin{align}
    \int^B: \Omega^*(M,\Lambda^*E)\to\Omega^*(M) 
\end{align}
associated with $g$. On a local chart $V\subseteq M$, we let
    $e_1, \cdots, e_n$
be an oriented orthonormal frame of $E$. Then, for any $\alpha\in\Omega^*(M,\Lambda^*E)$, we write
\begin{align}
    \alpha|_V = \sum_{r = 0}^n\hspace{+1mm}\sum_{1\leqslant i_1<\cdots<i_r\leqslant n}\alpha_{i_1\cdots i_r}\otimes (e_{i_1}\wedge\cdots\wedge e_{i_r}).
\end{align}
Here, $\alpha_{i_1\cdots i_r}\in\Omega^*(V)$.  
Then, the Berezin integral of $\alpha$ is given by 
\begin{align}\label{berezin integral local expression}
    \left.\left(\int^B \alpha\right)\right\vert_V = \alpha_{1\hspace{+0.25mm}2\hspace{+0.25mm}3\hspace{+0.25mm}\cdots\hspace{+0.25mm}n}.
\end{align}
The $\displaystyle\int^B \alpha$ is well-defined since the determinant of a special orthogonal matrix is $1$.  

We generalize the Berezin integral to the mapping cone situation: 
\begin{align}
   \int^B: \Omega^*(M,\Lambda^*E)\oplus\Omega^*(M,\Lambda^*E)&\to\Omega^*(M)\oplus\Omega^*(M) \nonumber\\
    (\alpha, \beta)&\mapsto \left(\int^B\alpha, \int^B\beta\right). 
\end{align}
It satisfies the following property:
\begin{proposition}
    For any $(\alpha,\beta)\in\Omega^*(M,\Lambda^*E)\oplus\Omega^*(M,\Lambda^*E)$, we have
    \begin{align}\label{d commutes with nabla in berezin integral}
        d^\omega\int^B (\alpha,\beta) = \int^B\mathbb{A}(\alpha,\beta). 
    \end{align}
\end{proposition}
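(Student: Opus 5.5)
The plan is to compare the two sides componentwise, after reducing everything to three local facts about $\int^B$. By definition,
\begin{align*}
d^\omega\int^B(\alpha,\beta) = \left(d\int^B\alpha + \omega\wedge\int^B\beta,\ -d\int^B\beta\right),
\end{align*}
\begin{align*}
\int^B\mathbb{A}(\alpha,\beta) = \left(\int^B\nabla\alpha + \int^B(\omega\wedge\beta),\ \int^B\Phi^\Lambda\alpha - \int^B\nabla\beta\right).
\end{align*}
By linearity it therefore suffices to prove, for every $\gamma\in\Omega^*(M,\Lambda^*E)$, the three identities
\begin{align*}
\text{(i) } \int^B\nabla\gamma = d\int^B\gamma,\qquad \text{(ii) } \int^B(\omega\wedge\gamma) = \omega\wedge\int^B\gamma,\qquad \text{(iii) } \int^B\Phi^\Lambda\gamma = 0 .
\end{align*}
Applying (i)–(iii) with $\gamma=\alpha$ and $\gamma=\beta$ then matches both components. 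All three are local statements, so I will work on a chart $V$ with an oriented orthonormal frame $e_1,\dots,e_n$. I will write $\gamma|_V=\sum_I\gamma_I\otimes e_I$, where $e_I = e_{i_1}\wedge\cdots\wedge e_{i_r}$, and use the notation $\nabla e_j=\eta_{ij}\otimes e_i$ and $\Phi e_j=\varphi_{ij}e_i$ from the proof of Proposition \ref{Bianchi identity another expression}.

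Identity (ii) is immediate. $\omega$ is a scalar form, so $\omega\wedge\gamma = \sum_I (\omega\wedge\gamma_I)\otimes e_I$, and the coefficient of $e_1\wedge\cdots\wedge e_n$ is $\omega\wedge\gamma_{1\cdots n}$.

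For (i), I use the defining rule
\begin{align*}
\nabla(\gamma_I\otimes e_I)=d\gamma_I\otimes e_I+(-1)^{\deg\gamma_I}\gamma_I\wedge\nabla e_I .
\end{align*}
Since $\nabla e_I$ lies in $\Omega^1(V,\Lambda^{|I|}E)$, only $I=(1,\dots,n)$ can contribute to the top-degree coefficient. The Leibniz extension gives
\begin{align*}
\nabla(e_1\wedge\cdots\wedge e_n)=\sum_j \eta_{jj}\otimes e_1\wedge\cdots\wedge e_n .
\end{align*}
This vanishes because $\eta_{jj}=0$ by the skew-symmetry (\ref{skew-adjoint relations}) of a Euclidean connection. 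Hence the top coefficient of $\nabla\gamma$ is $d\gamma_{1\cdots n}$, which is (i).

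Identity (iii) is the analogous argument with $\Phi$ in place of $\nabla$. $\Phi^\Lambda$ preserves each $\Lambda^rE$, so again only the top term of $\gamma$ contributes. By (\ref{derivation common pattern}),
\begin{align*}
\Phi^\Lambda(e_1\wedge\cdots\wedge e_n)=\Big(\sum_j\varphi_{jj}\Big)e_1\wedge\cdots\wedge e_n=\operatorname{tr}(\Phi)\,e_1\wedge\cdots\wedge e_n ,
\end{align*}
and this is $0$ since $\Phi$ is skew-adjoint.

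The only point requiring care is the bookkeeping in (i): one must check that the lower-degree summands of $\gamma$ cannot feed into the top coefficient, and that the sign $(-1)^{\deg\gamma_I}$ is harmless because the term it multiplies vanishes anyway. This is the one step where the Euclidean hypothesis on $\nabla$ is genuinely used, just as skew-adjointness of $\Phi$ is used in (iii). Once (i)–(iii) hold, the proposition follows by assembling the two components as above.
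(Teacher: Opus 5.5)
Your proposal is correct and follows essentially the same route as the paper. The paper also splits $\int^B\mathbb{A}(\alpha,\beta)$ componentwise, uses $\int^B\nabla\gamma=d\int^B\gamma$, and kills $\int^B\Phi^\Lambda\alpha$ via $\Phi^\Lambda(e_1\wedge\cdots\wedge e_n)=0$, which it derives from $g(\Phi e_i,e_i)=0$; this is your $\mathrm{tr}(\Phi)=0$ argument. The only difference is that you verify identity (i) directly from $\eta_{jj}=0$, whereas the paper cites Zhang's Proposition 3.2 for it.
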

\begin{proof}
    By \cite[Proposition 3.2]{wittendeformationweipingzhang}, we have 
    \begin{align}
        \int^B\mathbb{A}(\alpha,\beta) = (d\int^B\alpha + \omega\wedge\int^B\beta, \int^B\Phi^\Lambda\alpha - d\int^B\beta).
    \end{align}
    Let $e_1, \cdots e_n$ be an oriented local orthonormal frame of $E$. Since $\Phi$ is skew-adjoint with respect to $g$, we see that 
    \begin{align}
        g(\Phi(e_i), e_i) = 0\ \ (1\leqslant i\leqslant n),
    \end{align}
    and therefore
    \begin{align}
        \Phi^\Lambda(e_1\wedge\cdots\wedge e_n) = \sum_{j = 1}^n e_1\wedge\cdots\wedge \Phi(e_j)\wedge\cdots\wedge e_n = 0. 
    \end{align}
    Thus, we have 
    \begin{align}
        \int^B\mathbb{A}(\alpha,\beta) = \left(d\int^B\alpha + \omega\wedge\int^B\beta,\ - d\int^B\beta\right) = d^\omega\int^B (\alpha,\beta).
    \end{align}
    The proof is complete.
\end{proof}

\section{Thom form for the mapping cone complex}\label{section of mapping cone thom}
We continue with the settings of $E$ and $\mathbb{A}$ from Sections \ref{section of bianchi identity} and \ref{section of berezin integral}. In this section, for the de Rham mapping cone cochain complex, we give the Thom form
in the sense of \cite[Theorem 4.5]{Mathai-Quillen_Thom_form}. 

We follow the steps in \cite[Section 1.6]{bgv}, \cite[Chapter 3]{wittendeformationweipingzhang}, and \cite[Chapter 3]{weipingzhangnewedition}. Let 
\begin{align}
    \sigma: E\to M
\end{align}
be the projection from the total space of $E$ to the base space $M$. Then, we let 
$\widetilde{E}$ be the pullback bundle $\sigma^*E$ over $E$. Also, we let 
\begin{align}
    \widetilde{\omega} = \sigma^*\omega,\ 
    \widetilde{\nabla} = \sigma^*\nabla,\ 
    \widetilde{\Phi} = \sigma^*\Phi,\ 
    \widetilde{g} = \sigma^*g. 
\end{align}
Then, the $\widetilde{\Phi}\in\Omega^0(E,\edmp(\widetilde{E}))$ induces the associated derivation 
$\widetilde{\Phi}^\Lambda$ in the same manner as (\ref{derivation common pattern}).
Then, we have the de Rham mapping cone complex 
\begin{align}\label{mapping cone complex of the bundle}
    d^{\widetilde{\omega}}: \Omega^i(E)\oplus\Omega^{i-1}(E)&\to\Omega^{i+1}(E)\oplus\Omega^{i}(E)\nonumber\\
    (\alpha,\beta)&\mapsto (d\alpha + \widetilde{\omega}\wedge\beta, -d\beta),
\end{align}
 and the mapping cone covariant derivative
\begin{align}
    \widetilde{\mathbb{A}}:  \Omega^i(E, \Lambda^*\widetilde{E})\oplus\Omega^{i-1}(E, \Lambda^*\widetilde{E})&\to\Omega^{i+1}(E, \Lambda^*\widetilde{E})\oplus\Omega^{i}(E, \Lambda^*\widetilde{E}) \nonumber\\
    (\alpha, \beta) &\mapsto \left(\widetilde{\nabla}\alpha + \widetilde{\omega}\wedge\beta,\ \widetilde{\Phi}^\Lambda\alpha-\widetilde{\nabla}\beta\right).
\end{align}

Let $\mathbf{v}$ be the tautological section of $\widetilde{E}$. In fact, given an oriented local orthonormal frame $e_1, \cdots, e_n$ of $E$, we let $\widetilde{e}_i = \sigma^* e_i$ for $1\leqslant i\leqslant n$. Then, 
the tautological section $\mathbf{v}: E\to\widetilde{E}$
is defined by 
\begin{align}\label{coordinate expression of tautological section}
    \mathbf{v}(y_1 e_1 + \cdots + y_n e_n) = y_1\widetilde{e}_1 + \cdots + y_n\widetilde{e}_n
\end{align}
for all $y_1, \cdots, y_n\in\mathbb{R}$. Then, we have 
\begin{align}
    (\mathbf{v}, 0)\in\Omega^0(E, \widetilde{E})\oplus\Omega^{-1}(E, \widetilde{E})
\end{align}
and let 
\begin{align}
    |\mathbf{v}| = \sqrt{\widetilde{g}(\mathbf{v}, \mathbf{v})}. 
\end{align}
In addition, following (\ref{definition of q}) and (\ref{definition of s}), we have 
\begin{align}
    Q_{\widetilde{\mathbb{A}}} = \sum_{1\leqslant i<j\leqslant n}\widetilde{g}\left(\widetilde{\nabla}^2\widetilde{e}_i + \widetilde{\omega}\wedge\widetilde{\Phi}\widetilde{e}_i,\ \widetilde{e}_j\right) \otimes(\widetilde{e}_i\wedge \widetilde{e}_j)
    \in\Omega^2(E,\Lambda^2\widetilde{E})
\end{align}
and
\begin{align}
    S_{\widetilde{\mathbb{A}}} = \sum_{1\leqslant i< j\leqslant n}\widetilde{g}\left(\widetilde{\Phi}\widetilde{\nabla}\widetilde{e}_i - \widetilde{\nabla}\widetilde{\Phi}\widetilde{e}_i,\ \widetilde{e}_j\right)\otimes(\widetilde{e}_i\wedge \widetilde{e}_j) \in\Omega^1(E,\Lambda^2\widetilde{E}). 
\end{align}
Following \cite[Section 1.6 page 53]{bgv}, we let $\mathbf{v}\lrcorner$ be the contraction operator by $\mathbf{v}$: For any $\eta\in\Omega^i(E)$ and any smooth section $w: E\to\Lambda^*\widetilde{E}$, 
\begin{align}\label{original contraction}
    \mathbf{v}\lrcorner(\eta\otimes w)  = (-1)^i\eta\otimes(\mathbf{v}\lrcorner w)
\end{align}
defines the contraction by $\mathbf{v}$. Then, we extend the contraction to pairs: 
\begin{align}\label{extended contraction}
    \mathbf{v}\lrcorner: \Omega^i(E,\Lambda^*\widetilde{E})\oplus\Omega^{i-1}(E, \Lambda^*\widetilde{E}) &\to \Omega^i(E,\Lambda^*\widetilde{E})\oplus\Omega^{i-1}(E, \Lambda^*\widetilde{E}) \nonumber \\
  (\alpha, \beta) &\mapsto  (\mathbf{v}\lrcorner\alpha, -\mathbf{v}\lrcorner\beta).
\end{align}
The minus sign in $-\mathbf{v}\lrcorner\beta$ is for the compatibility with signs conventions and with the wedge product (\ref{define the wedge product for pairs}) in this pair situation. 

By the definition of the Berezin integral, we refine (\ref{d commutes with nabla in berezin integral}) to:
\begin{corollary}\label{swap d and connection in berezin integral}
For any $(\alpha, \beta)\in\Omega^*(E,\Lambda^*E)\oplus\Omega^*(E,\Lambda^*E)$, we have
    \begin{align}
        d^{\widetilde{\omega}}\int^B(\alpha, \beta) = \int^B \left(\widetilde{\mathbb{A}}+\mathbf{v}\lrcorner\right)(\alpha,\beta).
    \end{align}
\end{corollary}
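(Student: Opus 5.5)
The plan is to reduce the statement to the previous proposition, (\ref{d commutes with nabla in berezin integral}), applied on the total space $E$ with the pulled-back data. That proposition, applied to the bundle $\widetilde{E}\to E$ with $\widetilde{\omega},\widetilde{\nabla},\widetilde{\Phi},\widetilde{g}$, directly gives
\begin{align*}
d^{\widetilde{\omega}}\int^B(\alpha,\beta) = \int^B\widetilde{\mathbb{A}}(\alpha,\beta).
\end{align*}
Its hypotheses transfer because the pullback of a Euclidean connection is Euclidean for $\widetilde{g}$, and the pullback of a skew-adjoint endomorphism is skew-adjoint. Hence $\widetilde{\mathbb{A}}$ is a Euclidean mapping cone covariant derivative under $\widetilde{g}$, and the pulled-back frame $\widetilde{e}_1,\dots,\widetilde{e}_n$ is an oriented local orthonormal frame of $\widetilde{E}$.

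It therefore remains to show that $\int^B\mathbf{v}\lrcorner(\alpha,\beta)=(0,0)$. By (\ref{extended contraction}) this equals $\left(\int^B\mathbf{v}\lrcorner\alpha,\,-\int^B\mathbf{v}\lrcorner\beta\right)$, so it suffices to prove $\int^B(\mathbf{v}\lrcorner\gamma)=0$ for every $\gamma\in\Omega^*(E,\Lambda^*\widetilde{E})$. Work locally with the frame $\widetilde{e}_i$ and write $\gamma=\sum\gamma_{i_1\cdots i_r}\otimes(\widetilde{e}_{i_1}\wedge\cdots\wedge\widetilde{e}_{i_r})$. By (\ref{original contraction}), the term $\mathbf{v}\lrcorner(\gamma_{I}\otimes\widetilde{e}_I)$ equals $\pm\gamma_I\otimes(\mathbf{v}\lrcorner\widetilde{e}_I)$. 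Contraction by $\mathbf{v}$, through the metric, lowers the exterior degree by one. So $\mathbf{v}\lrcorner\widetilde{e}_I\in\Lambda^{r-1}\widetilde{E}$ has no component along $\widetilde{e}_1\wedge\cdots\wedge\widetilde{e}_n$, even when $r=n$, where it lands in $\Lambda^{n-1}$. By the local formula (\ref{berezin integral local expression}), the Berezin integral only extracts the top-degree coefficient, so it vanishes on $\mathbf{v}\lrcorner\gamma$. Since the statement is local and both sides are globally defined, the local computation suffices.

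Adding the two pieces gives $\int^B(\widetilde{\mathbb{A}}+\mathbf{v}\lrcorner)(\alpha,\beta)=\int^B\widetilde{\mathbb{A}}(\alpha,\beta)=d^{\widetilde{\omega}}\int^B(\alpha,\beta)$. No step is a genuine obstacle. The only point needing care is confirming that the earlier proposition really applies over the noncompact total space $E$ with the pulled-back Euclidean data; its proof is purely local, relying on \cite[Proposition 3.2]{wittendeformationweipingzhang} and on $\widetilde{\Phi}^\Lambda(\widetilde{e}_1\wedge\cdots\wedge\widetilde{e}_n)=0$ from skew-adjointness, so it goes through verbatim. Signs in (\ref{extended contraction}) play no role, since the contribution is zero.
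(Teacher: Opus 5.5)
Your proposal is correct and follows the paper's own argument. The paper obtains the corollary from (\ref{d commutes with nabla in berezin integral}) applied to the pulled-back Euclidean data on $\widetilde{E}\to E$, together with the observation, left implicit in its phrase ``by the definition of the Berezin integral'', that $\int^B$ annihilates $\mathbf{v}\lrcorner(\cdot)$ because contraction lowers the exterior degree and so never produces a $\widetilde{e}_1\wedge\cdots\wedge\widetilde{e}_n$ component; you have written out both steps in more detail than the paper does.
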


Now, we let 
\begin{align}
    \mathcal{A} = \left(\dfrac{1}{2}|\mathbf{v}|^2, 0\right) + \widetilde{\mathbb{A}}(\mathbf{v},0) - \left(Q_{\widetilde{\mathbb{A}}}, S_{\widetilde{\mathbb{A}}}\right), 
\end{align}
and 
\begin{align}
    \mathcal{U} = (-1)^{n(n+1)/2}\left(\dfrac{1}{2\pi}\right)^{n/2}\int^B e^{-\mathcal{A}}. 
\end{align}
In the definition of $\mathcal{U}$, we use the Taylor expansion 
\begin{align}
    e^z = 1 + z + \dfrac{1}{2!}z^2 + \dfrac{1}{3!}z^3 + \cdots  
\end{align}
and the wedge product on $\Omega^*(E,\Lambda^*E)\oplus\Omega^*(E,\Lambda^*E)$ determined by 
\begin{align}\label{define the wedge product for pairs}
    & (\alpha\otimes x, \beta\otimes y)\wedge(\gamma\otimes u, \delta\otimes v) \nonumber\\
    =\ & \left((-1)^{|x||\gamma|}(\alpha\wedge\gamma)\otimes(x\wedge u),\hspace{+0.5mm}(-1)^{|y||\gamma|}(\beta\wedge\gamma)\otimes(y\wedge u) + (-1)^{|x|(|\delta|+1)}(\alpha\wedge\delta)\otimes (x\wedge v)\right).  \nonumber\\
    & (\text{Here, $|x|, |\gamma|$, etc., are the degrees.})
\end{align}
In other words, 
\begin{align}
    e^{-\mathcal{A}} = (1,0) + (-\mathcal{A}) + \dfrac{1}{2!}(-\mathcal{A})\wedge(-\mathcal{A}) + \dfrac{1}{3!}(-\mathcal{A})\wedge(-\mathcal{A})\wedge(-\mathcal{A})+\cdots.
\end{align}
This series stops because the rank of $E$ and the dimension of $M$ are both finite. 

Recall that $n$ is the rank of $E$. We now prove the main result Theorem \ref{main result}. 
\begin{proposition}
    The pair $\mathcal{U}\in\Omega^n(E)\oplus\Omega^{n-1}(E)$ is $d^{\widetilde{\omega}}$-closed. Also, it satisfies
    \begin{align}
        \int_{E/M}\mathcal{U} = (1,0).
    \end{align}
    Here, $\displaystyle\int_{E/M}$ means the integration along the fiber.
\end{proposition}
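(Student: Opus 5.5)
Following the Mathai--Quillen pattern of \cite[Section 1.6]{bgv} and \cite[Chapter 3]{wittendeformationweipingzhang}, I would reduce closedness to the statement that the mapping cone operator $\widetilde{\mathbb{A}}+\mathbf{v}\lrcorner$ kills $\mathcal{A}$, and hence kills $e^{-\mathcal{A}}$. By Corollary \ref{swap d and connection in berezin integral},
\begin{align*}
d^{\widetilde{\omega}}\,\mathcal{U} = (-1)^{n(n+1)/2}\left(\dfrac{1}{2\pi}\right)^{n/2}\int^B\left(\widetilde{\mathbb{A}}+\mathbf{v}\lrcorner\right)e^{-\mathcal{A}},
\end{align*}
so it suffices to show $\left(\widetilde{\mathbb{A}}+\mathbf{v}\lrcorner\right)e^{-\mathcal{A}}=(0,0)$.

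\textbf{Step 1: derivation property.} I would check that $\widetilde{\mathbb{A}}+\mathbf{v}\lrcorner$ is an odd derivation for the pair wedge product (\ref{define the wedge product for pairs}). For $\widetilde{\nabla}$ and $\mathbf{v}\lrcorner$ this is the classical fact. The new ingredients are the $\widetilde{\omega}\wedge\beta$ and $\widetilde{\Phi}^\Lambda\alpha$ terms: here one uses that $\widetilde{\Phi}^\Lambda$ is an even derivation of $\Lambda^*\widetilde{E}$ and that $\omega$ is closed of even degree. The sign in (\ref{extended contraction}) is designed exactly to make this compatible. Since every component of $\mathcal{A}$ has even total degree, the components commute, and the derivation property gives
\begin{align*}
\left(\widetilde{\mathbb{A}}+\mathbf{v}\lrcorner\right)e^{-\mathcal{A}} = -\left(\left(\widetilde{\mathbb{A}}+\mathbf{v}\lrcorner\right)\mathcal{A}\right)\wedge e^{-\mathcal{A}}.
\end{align*}

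\textbf{Step 2: $(\widetilde{\mathbb{A}}+\mathbf{v}\lrcorner)\mathcal{A}=0$.} I would compute the terms one at a time.
\begin{enumerate}[label=(\roman*)]
\item $\widetilde{\mathbb{A}}(\tfrac12|\mathbf{v}|^2,0)=(d\tfrac12|\mathbf{v}|^2,0)$, and this equals $\widetilde{g}(\widetilde{\nabla}\mathbf{v},\mathbf{v})$ paired with zero second component. Here $\widetilde{\Phi}$ does not enter because $\widetilde{\Phi}^\Lambda$ vanishes on $\Lambda^0$.
\item $\mathbf{v}\lrcorner\,\widetilde{\mathbb{A}}(\mathbf{v},0)=\mathbf{v}\lrcorner(\widetilde{\nabla}\mathbf{v},\widetilde{\Phi}\mathbf{v})=\bigl(-\widetilde{g}(\mathbf{v},\widetilde{\nabla}\mathbf{v}),\,-\widetilde{g}(\mathbf{v},\widetilde{\Phi}\mathbf{v})\bigr)$. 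The second entry vanishes by skew-adjointness, and the first cancels (i).
\item $\mathbf{v}\lrcorner(\tfrac12|\mathbf{v}|^2,0)=0$.
\item $\widetilde{\mathbb{A}}\widetilde{\mathbb{A}}(\mathbf{v},0)=\bigl(\widetilde{\nabla}^2\mathbf{v}+\widetilde{\omega}\wedge\widetilde{\Phi}\mathbf{v},\;\widetilde{\Phi}\widetilde{\nabla}\mathbf{v}-\widetilde{\nabla}\widetilde{\Phi}\mathbf{v}\bigr)$.
\item $\widetilde{\mathbb{A}}(Q_{\widetilde{\mathbb{A}}},S_{\widetilde{\mathbb{A}}})=0$ by Proposition \ref{Bianchi identity another expression}, applied on $E$ with the pulled-back data, which are still Euclidean.
\item The remaining claim is $\mathbf{v}\lrcorner(Q_{\widetilde{\mathbb{A}}},S_{\widetilde{\mathbb{A}}})=\widetilde{\mathbb{A}}^2(\mathbf{v},0)$, with the signs of (\ref{original contraction}) and (\ref{extended contraction}). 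In the frame, $\mathbf{v}=y_k\widetilde{e}_k$, and contracting $\widetilde{e}_i\wedge\widetilde{e}_j$ with $\mathbf{v}$ reproduces $\sum_{i}y_i\,\widetilde{\nabla}^2\widetilde{e}_i$, the $\widetilde{\omega}\wedge\widetilde{\Phi}$ term, and $\widetilde{\Phi}\widetilde{\nabla}\mathbf{v}-\widetilde{\nabla}\widetilde{\Phi}\mathbf{v}$. This uses the antisymmetry relations (\ref{skew-adjoint relations}) and the fact that the $dy_k$ contributions in (iv) cancel because $\widetilde{\mathbb{A}}$ is linear in the $y$'s.
\end{enumerate}
Summing (i)--(vi) gives zero, and closedness follows.

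\textbf{Step 3: fiber integral.} Integrate along a fiber $E_x$ over a point $x\in M$. Only terms of fiber degree $n$ survive, namely the purely vertical parts. Vertically, $\widetilde{\mathbb{A}}(\mathbf{v},0)$ restricts to $(dy_k\otimes\widetilde{e}_k,\,0)$, because the second component $\widetilde{\Phi}\mathbf{v}$ has form degree $0$. The terms $Q_{\widetilde{\mathbb{A}}}$ and $S_{\widetilde{\mathbb{A}}}$ are pulled back from $M$ and carry no $dy$.

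\begin{itemize}
\item \emph{First component.} The computation is the classical one: the Berezin integral of $e^{-|y|^2/2-dy_k\widetilde{e}_k}$ gives $(-1)^{n(n+1)/2}\,e^{-|y|^2/2}\,dy_1\cdots dy_n$ up to the stated normalisation. The Gaussian integral then cancels $(2\pi)^{-n/2}$, giving $1$.
\item \emph{Second component.} Its fiber-degree-$n$ part must involve $\mathcal{A}$'s second entry. That entry is $\widetilde{\Phi}\mathbf{v}$ with no $dy$ factor, or $S_{\widetilde{\mathbb{A}}}$, which is horizontal. By the pair wedge rule, each second-component term contains exactly one factor from a second entry. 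That factor contributes no $dy$, so the remaining factors supply at most $n-1$ vertical $dy$'s, and the second component integrates to $0$.
\end{itemize}

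\textbf{Main obstacle.} I expect the main obstacle to be the sign bookkeeping in Step 1 and in item (vi) of Step 2. The choice of $-\mathbf{v}\lrcorner\beta$ and of the signs in (\ref{define the wedge product for pairs}) must make everything consistent, and I would verify this first on monomials of low degree.
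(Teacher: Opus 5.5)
Your plan is the paper's proof. You reduce via Corollary~\ref{swap d and connection in berezin integral} to $(\widetilde{\mathbb{A}}+\mathbf{v}\lrcorner)e^{-\mathcal{A}}=(0,0)$, verify $(\widetilde{\mathbb{A}}+\mathbf{v}\lrcorner)\mathcal{A}=(0,0)$ with the skew-adjoint Bianchi identity (your (i)--(vi) reproduce (\ref{c1})--(\ref{c5})), and reduce the fiber integral to a Gaussian. Steps 2 and 3 are correct.

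\textbf{The gap is Step 1.} You flag it as the main obstacle but assert it will work out. For the product (\ref{define the wedge product for pairs}) \emph{as written}, $\mathbf{v}\lrcorner$ is a derivation (the only Leibniz rule the paper checks), but $\widetilde{\mathbb{A}}$ is not. With $x=y=u=v=1$ the product reads $(\alpha\wedge\gamma,\ \beta\wedge\gamma+\alpha\wedge\delta)$. For functions $f,h$ on $E$,
\[
d^{\widetilde{\omega}}\bigl((f,0)\wedge(0,h)\bigr)=(fh\,\widetilde{\omega},\,-h\,df-f\,dh),
\]
whereas
\[
d^{\widetilde{\omega}}(f,0)\wedge(0,h)+(f,0)\wedge d^{\widetilde{\omega}}(0,h)=(fh\,\widetilde{\omega},\,h\,df-f\,dh).
\]
No sign in front of the second summand repairs both entries.

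This breaks closedness itself, not just the derivation. Take $M=\mathbb{R}^2$, $\omega=dx_1\wedge dx_2$, $E$ trivial of rank $2$, $\nabla=d$, and $\Phi e_1=e_2$, $\Phi e_2=-e_1$. Then $S_{\widetilde{\mathbb{A}}}=0$ and $\mathcal{A}=(a,b)$ with
\[
a=\tfrac12|y|^2+a_1-\widetilde{\omega}\otimes(\widetilde{e}_1\wedge\widetilde{e}_2),\qquad a_1=dy_1\otimes\widetilde{e}_1+dy_2\otimes\widetilde{e}_2,\qquad b=y_1\widetilde{e}_2-y_2\widetilde{e}_1.
\]
With the product as written, the $\Lambda^2$-part of the second entry of $\mathcal{A}\wedge\mathcal{A}$ is $b\wedge a_1-a_1\wedge b=0$. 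The same cancellation occurs in every power $\mathcal{A}^{\wedge k}$, so $\mathcal{U}$ has zero second entry. Its first entry is $-\frac{1}{2\pi}e^{-|y|^2/2}(\widetilde{\omega}-dy_1\wedge dy_2)$, whose exterior derivative $\frac{1}{2\pi}e^{-|y|^2/2}(y_1dy_1+y_2dy_2)\wedge\widetilde{\omega}$ is nonzero.

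\textbf{The fix.} The second summand of (\ref{define the wedge product for pairs}) needs the missing Koszul sign, i.e.\ it should be $(-1)^{|\alpha|+|x|(|\delta|+1)}(\alpha\wedge\delta)\otimes(x\wedge v)$. In the scalar case this is the usual mapping cone product $(\alpha_1\wedge\alpha_2,\ \beta_1\wedge\alpha_2+(-1)^{|\alpha_1|}\alpha_1\wedge\beta_2)$.

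The cleanest proof of Step 1 for this product is to write $(\alpha,\beta)=\alpha+\theta\beta$. Here $\theta$ is a formal odd element of form degree one with $\theta^2=0$, supercommuting with $\Omega^*(E,\Lambda^*\widetilde{E})$ in the total grading. Then:
\begin{itemize}
\item the corrected product is ordinary multiplication;
\item $d^{\widetilde{\omega}}$ is $d$ extended by $d\theta=\widetilde{\omega}$;
\item $\widetilde{\mathbb{A}}=\widetilde{\nabla}+\theta\,\widetilde{\Phi}^\Lambda$, with $\widetilde{\nabla}\theta=\widetilde{\omega}$ and $\widetilde{\Phi}^\Lambda\theta=0$; this is an odd derivation, since $\theta$ times an even derivation is odd;
\item $\mathbf{v}\lrcorner$ is extended by $\mathbf{v}\lrcorner\theta=0$, which reproduces the sign in (\ref{extended contraction});
\item $\int^B(\alpha+\theta\beta)=\int^B\alpha+\theta\int^B\beta$.
\end{itemize}
Step 1 then holds, $e^{-\mathcal{A}}=(e^{-a},-b\wedge e^{-a})$, and Steps 2 and 3 go through unchanged. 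In the example the second entry becomes $-\frac{1}{2\pi}e^{-|y|^2/2}(y_1dy_1+y_2dy_2)$, which restores closedness. The paper's proof tacitly relies on the same unverified Leibniz rule for $\widetilde{\mathbb{A}}$, so this correction applies to it as well.
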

\begin{proof}
    First, the degree of $U$ is given by the rank of $E$ and (\ref{berezin integral local expression}). 
    
    Second, by Proposition \ref{Bianchi identity another expression}, we have 
    \begin{align}\label{c1}
        \widetilde{\mathbb{A}}\mathcal{A}
        =\ & \widetilde{\mathbb{A}}\left(\dfrac{1}{2}|\mathbf{v}|^2, 0\right) + \left((\widetilde{\nabla}^2 + \widetilde{\omega}\wedge\widetilde{\Phi})\mathbf{v}, (\widetilde{\Phi}\widetilde{\nabla} - \widetilde{\nabla}\widetilde{\Phi})\mathbf{v}\right) - \widetilde{\mathbb{A}}\left(Q_{\widetilde{\mathbb{A}}}, S_{\widetilde{\mathbb{A}}}\right) \nonumber\\
        =\ & (-\mathbf{v}\lrcorner\widetilde{\nabla}\mathbf{v}, 0) + \left((\widetilde{\nabla}^2 + \widetilde{\omega}\wedge\widetilde{\Phi})\mathbf{v}, (\widetilde{\Phi}\widetilde{\nabla} - \widetilde{\nabla}\widetilde{\Phi})\mathbf{v}\right)
    \end{align}
    and then 
    \begin{align}\label{c2}
       \mathbf{v}\lrcorner\mathcal{A} =\ & \mathbf{v}\lrcorner\left(\dfrac{1}{2}|\mathbf{v}|^2, 0\right) + (\mathbf{v}\lrcorner\widetilde{\nabla}\mathbf{v}, -\mathbf{v}\lrcorner\widetilde{\Phi}\mathbf{v}) - \left(\mathbf{v}\lrcorner Q_{\widetilde{\mathbb{A}}}, -\mathbf{v}\lrcorner S_{\widetilde{\mathbb{A}}}\right)  \nonumber\\
       =\ & (\mathbf{v}\lrcorner\widetilde{\nabla}\mathbf{v}, -\mathbf{v}\lrcorner\widetilde{\Phi}\mathbf{v}) - \left(\mathbf{v}\lrcorner Q_{\widetilde{\mathbb{A}}}, -\mathbf{v}\lrcorner S_{\widetilde{\mathbb{A}}}\right).
    \end{align}
    Because $\widetilde{\Phi}$ is skew-adjoint with respect to $\widetilde{g}$, we notice that 
    \begin{align}\label{c3}
        -\mathbf{v}\lrcorner\widetilde{\Phi}\mathbf{v} = -\widetilde{g}(\widetilde{\Phi}\mathbf{v}, \mathbf{v}) = 0. 
    \end{align}
    Also, we notice that 
    \begin{align}\label{c4}
        \mathbf{v}\lrcorner Q_{\widetilde{\mathbb{A}}} = (\widetilde{\nabla}^2 + \widetilde{\omega}\wedge\widetilde{\Phi})\mathbf{v}
    \end{align}
    and that 
    \begin{align}\label{c5}
        -\mathbf{v}\lrcorner S_{\widetilde{\mathbb{A}}} = (\widetilde{\Phi}\widetilde{\nabla} - \widetilde{\nabla}\widetilde{\Phi})\mathbf{v}. 
    \end{align}
By (\ref{c1}), (\ref{c2}), (\ref{c3}), (\ref{c4}), and (\ref{c5}), we have 
\begin{align}
    (\widetilde{\mathbb{A}} + \mathbf{v}\lrcorner)\mathcal{A} = (0, 0).
\end{align}
By (\ref{original contraction}) and (\ref{extended contraction}) and checking the degree of $\mathcal{A}$, we have
\begin{align}
   \hspace{-2.61mm} & \mathbf{v}\lrcorner(\mathcal{A}\wedge\cdots\wedge\mathcal{A}) \nonumber\\
   \hspace{-2.61mm} =\ & (\mathbf{v}\lrcorner\mathcal{A})\wedge\mathcal{A}\wedge\cdots\wedge\mathcal{A} + \mathcal{A}\wedge(\mathbf{v}\lrcorner\mathcal{A})\wedge\cdots\wedge\mathcal{A} + \cdots + \mathcal{A}\wedge\mathcal{A}\wedge\cdots\wedge(\mathbf{v}\lrcorner\mathcal{A}).
\end{align}
Then, we find $\mathcal{U}$ is $d^{\widetilde{\omega}}$-closed:   
\begin{align}
    d^{\widetilde{\omega}}\mathcal{U} = (-1)^{n(n+1)/2}\left(\dfrac{1}{2\pi}\right)^{n/2}\int^B (\widetilde{\mathbb{A}} + \mathbf{v}\lrcorner) e^{-\mathcal{A}} = 0. 
\end{align}

Third, by the definition \cite[(4.4)]{bott2013differential} of the integration along the fiber, we use the local expression (\ref{coordinate expression of tautological section}) and obtain 
\begin{align}
     & \int_{E/M}\mathcal{U} \nonumber\\
    =\ & \left((-1)^{n(n+1)/2}\left(\dfrac{1}{2\pi}\right)^{n/2}\int_{\mathbb{R}^n}e^{-\frac{1}{2}(y_1^2 + \cdots + y_n^2)}\int^B \dfrac{1}{n!}(1-dy_1\otimes\widetilde{e}_1 - \cdots - dy_n\otimes\widetilde{e}_n)^n,\ 0\right).
\end{align}
Here, the power $n$ on $(1-dy_1\otimes\widetilde{e}_1 - \cdots - dy_n\otimes\widetilde{e}_n)^n$ means wedge $n$ times. Since 
\begin{align}
     (dy_i\otimes \widetilde{e}_i)\wedge(dy_j\otimes \widetilde{e}_j) = -(dy_i\wedge dy_j)\otimes(\widetilde{e}_i\wedge \widetilde{e}_j) = (dy_j\otimes \widetilde{e}_j)\wedge (dy_i\otimes \widetilde{e}_i),
\end{align}
then as in \cite[Proposition 3.1]{wittendeformationweipingzhang}, we see that 
\begin{align}
    &  (-1)^{n(n+1)/2}\left(\dfrac{1}{2\pi}\right)^{n/2}\int_{\mathbb{R}^n}e^{-\frac{1}{2}(y_1^2 + \cdots + y_n^2)}\int^B \dfrac{1}{n!}(1-dy_1\otimes\widetilde{e}_1 - \cdots - dy_n\otimes\widetilde{e}_n)^n   \nonumber\\
    =\ & \left(\dfrac{1}{2\pi}\right)^{n/2}\int_{\mathbb{R}^n}e^{-\frac{1}{2}(y_1^2 + \cdots + y_n^2)}dy_1\wedge\cdots\wedge dy_n   \nonumber\\
    =\ & 1. 
\end{align}
The proof of Theorem \ref{main result} is complete.
\end{proof}
\begin{remark}\normalfont
    As we see, it is not very hard to obtain the normal distribution and the Gaussian integral. The curved terms does not contribute to the integration along the fiber. In fact, in both Mathai-Quillen's original work and our current construction, the hardest task is to put on the curved terms so that $\mathcal{U}$ is closed. 
\end{remark}
\begin{remark}\normalfont
In \cite[Setting 3.1]{morrison2024cohomology}, the form $\widetilde{\omega}$ is replaced by $\widetilde{\omega} + d\mu$. This $\mu$ is a $1$-form on the total space of $E$. Actually, the map
\begin{align}
    \varrho: \Omega^i(E)\oplus\Omega^{i-1}(E)&\to\Omega^{i}(E)\oplus\Omega^{i-1}(E)  \nonumber\\
    (\alpha, \beta)&\mapsto (\alpha+\mu\wedge\beta, \beta)
\end{align}
satisfies 
\begin{align}
    d^{\widetilde{\omega}}\varrho(\alpha,\beta) = \varrho\hspace{+0.5mm} d^{\widetilde{\omega}+d\mu}(\alpha,\beta).
\end{align}
This means that adding or subtracting $d\mu$ does not affect the cohomology computed by (\ref{mapping cone complex of the bundle}). 
Thus, we will only use $\widetilde{\omega}$. 
\end{remark}

\section{Transgression of the Thom form}\label{section of transgression}
We continue with the settings of $E$, $\mathbb{A}$, $\widetilde{E}$, and $\widetilde{\mathbb{A}}$ from Sections \ref{section of bianchi identity}, \ref{section of berezin integral}, and \ref{section of mapping cone thom}. In this section, we prove the  transgression formula for the Thom form $\mathcal{U}$. The proof follows the pattern of \cite[Proposition 1.54]{bgv}. 

We fix the form $\omega$, the metric $g$, and the tautological section $\mathbf{v}$. Then, we let $\nabla_t\ (t\in\mathbb{R})$ be a smooth family of Euclidean connections with respect to $g$, and let 
\begin{align}
    \Phi_t\in\Omega^0(M,\edmp(E))\ (t\in\mathbb{R})
\end{align}
be a smooth family of skew-adjoint endomorphisms with respect to $g$. Consequently, we obtain $\Phi_t^\Lambda$, $\mathbb{A}_t$, $\widetilde{\nabla}_t$, $\widetilde{\Phi}_t^\Lambda$, $\widetilde{\Phi}_t$,  $\widetilde{\mathbb{A}}_t$, and then $\mathcal{U}_t$. 

Besides using (\ref{definition of q}) and (\ref{definition of s}) to get $Q_{{\mathbb{A}}_t}$ and $S_{{\mathbb{A}}_t}$, we also define 
\begin{align}
    Y_{\mathbb{A}_t} = \sum_{1\leqslant i<j\leqslant n}g\left(\dfrac{d\nabla_t}{dt}e_i, e_j\right)\otimes(e_i\wedge e_j)
\end{align}
and 
\begin{align}
    Z_{\mathbb{A}_t} = \sum_{1\leqslant i<j\leqslant n}g\left(\dfrac{d\Phi_t}{dt}e_i, e_j\right)\otimes(e_i\wedge e_j).
\end{align}
They are independent of the chosen oriented local orthonormal frame $e_1, \cdots, e_n$. 
\begin{proposition}
    For $t\in\mathbb{R}$, we have 
    \begin{align}\label{connection on connection identified forms}
        \mathbb{A}_t(Y_{\mathbb{A}_t}, Z_{\mathbb{A}_t}) = \left(\dfrac{dQ_{{\mathbb{A}}_t}}{dt}, \dfrac{dS_{{\mathbb{A}}_t}}{dt}\right).
    \end{align}
\end{proposition}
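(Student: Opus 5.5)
The plan is to mimic the proof of Proposition \ref{Bianchi identity another expression}: compute both sides in an oriented local orthonormal frame $e_1,\cdots,e_n$ of $E$ with the Einstein summation convention, and compare the coefficients of $e_i\wedge e_j$. We keep the notation $\nabla_t e_j=\eta_{ij}\otimes e_i$, $\nabla_t^2 e_j=R_{ij}\otimes e_i$ and $\Phi_t e_j=\varphi_{ij}e_i$, all now depending on $t$. We also introduce
\begin{align}
\dfrac{d\nabla_t}{dt}e_j=\theta_{ij}\otimes e_i,\qquad \dfrac{d\Phi_t}{dt}e_j=\psi_{ij}e_i .
\end{align}
Here $\theta_{ij}=\dot\eta_{ij}$ are $1$-forms and $\psi_{ij}=\dot\varphi_{ij}$ are functions. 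Since $g$ is fixed and each $\nabla_t$ (resp. $\Phi_t$) is Euclidean (resp. skew-adjoint), we have $\theta_{ij}=-\theta_{ji}$ and $\psi_{ij}=-\psi_{ji}$. Then $2Y_{\mathbb{A}_t}=\theta_{ji}\otimes(e_i\wedge e_j)$ and $2Z_{\mathbb{A}_t}=\psi_{ji}\otimes(e_i\wedge e_j)$. Frame independence is already asserted, so a local check suffices.

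\textbf{Right-hand side.} Differentiate the local coefficients appearing in (\ref{first part first}) and (\ref{something we need to prove}). For $Q_{\mathbb{A}_t}$, the coefficient $R_{ji}+\varphi_{ji}\omega$ has $t$-derivative $\dot R_{ji}+\psi_{ji}\omega$. By the structural identity (\ref{structural identity}),
\begin{align}
\dot R_{ji}=d\theta_{ji}+\theta_{jk}\wedge\eta_{ki}+\eta_{jk}\wedge\theta_{ki}.
\end{align}
For $S_{\mathbb{A}_t}$, the coefficient $\varphi_{jk}\eta_{ki}-d\varphi_{ji}-\varphi_{ki}\eta_{jk}$ has $t$-derivative
\begin{align}
\psi_{jk}\eta_{ki}+\varphi_{jk}\theta_{ki}-d\psi_{ji}-\psi_{ki}\eta_{jk}-\varphi_{ki}\theta_{jk}.
\end{align}

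\textbf{Left-hand side, first component.} This is $\nabla_tY_{\mathbb{A}_t}+\omega\wedge Z_{\mathbb{A}_t}$. The term $\omega\wedge Z_{\mathbb{A}_t}$ produces exactly $\psi_{ji}\omega\otimes(e_i\wedge e_j)$. Expanding $\nabla_t\big(\theta_{ji}\otimes(e_i\wedge e_j)\big)$ by the Leibniz rule gives $d\theta_{ji}$ plus two terms of the form $\theta\wedge\eta$. Reindexing these as in (\ref{first part 3rd}) and using $\eta_{ik}=-\eta_{ki}$, $\theta_{ik}=-\theta_{ki}$, they should reassemble to $\theta_{jk}\wedge\eta_{ki}+\eta_{jk}\wedge\theta_{ki}$. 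This is the familiar identity $\dot R=\nabla(\dot\nabla)$, and the sign $(-1)^1$ from $\theta$ being a $1$-form must be tracked.

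\textbf{Left-hand side, second component.} This is $\Phi_t^\Lambda Y_{\mathbb{A}_t}-\nabla_t Z_{\mathbb{A}_t}$. The derivation gives
\begin{align}
\theta_{ji}\otimes(\varphi_{ki}e_k\wedge e_j+\varphi_{kj}e_i\wedge e_k),
\end{align}
which after reindexing as in (\ref{remove 1}) should become $(\varphi_{jk}\theta_{ki}-\varphi_{ki}\theta_{jk})\otimes(e_i\wedge e_j)$. The covariant derivative gives $-\nabla_t\big(\psi_{ji}\otimes(e_i\wedge e_j)\big)$, which equals $-d\psi_{ji}$ plus $\psi\eta$ terms. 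Reindexing as in (\ref{remove 2}) should turn these into $\psi_{jk}\eta_{ki}-\psi_{ki}\eta_{jk}$. Comparing with the derivative of the $S$-coefficient then finishes the proof.

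\textbf{Main obstacle.} The only real difficulty is sign and index bookkeeping. This includes the sign convention for $\nabla$ on forms of odd degree and the way the derivation $\Phi_t^\Lambda$ commutes past the $1$-form coefficient $\theta_{ji}$. Matching the $\varphi\theta$ terms in the second component, in particular the ordering $\varphi\theta$ versus $\theta\varphi$, is where I expect to have to be most careful. I would first verify it in rank $n=2$, where only $e_1\wedge e_2$ appears, before doing the general reindexing.
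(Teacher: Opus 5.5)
Your proposal is correct and follows essentially the same route as the paper: a computation in a fixed oriented orthonormal frame, with $\theta_{ij}=\dot\eta_{ij}$ and $\psi_{ij}=\dot\varphi_{ij}$. The first component comes from differentiating the structural identity, and the second from reindexing the $\varphi\theta$ and $\psi\eta$ terms using skew-symmetry. The reindexings you hedge with ``should'' all work out with the signs you indicate, including the $(-1)^1$ Leibniz sign that makes both $\theta\wedge\eta$ terms enter with a minus before reindexing.
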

\begin{proof}
    We need to show that 
    \begin{align}
        {\nabla}_t Y_{{\mathbb{A}_t}} + {\omega}\wedge Z_{{\mathbb{A}_t}} & = \dfrac{dQ_{{\mathbb{A}}_t}}{dt},  \label{transgression first}\\
      \text{and\ \ }  {\Phi}_t^\Lambda Y_{{\mathbb{A}_t}} - {\nabla}_t Z_{{\mathbb{A}_t}} & = \dfrac{dS_{{\mathbb{A}}_t}}{dt}.  \label{transgression second}
    \end{align}
    For $t\in\mathbb{R}$, we let 
    \begin{align}
        \nabla_t e_j = \eta_{ij}\otimes e_i,
    \end{align}
    where $\eta_{ij}$ is smooth in terms of $t$. We omit $t$ in $\eta_{ij}$ for convenience. 
    Then, we have 
    \begin{align}
        & 2{\nabla}_t Y_{{\mathbb{A}_t}}  \nonumber\\
        =\ & \sum_{i, j = 1}^n d\left(g\left(\dfrac{d\nabla_t}{dt}e_i, e_j\right)\right)\otimes(e_i\wedge e_j) - \sum_{i, j = 1}^n g\left(\dfrac{d\nabla_t}{dt}e_i, e_j\right)\wedge\nabla_t (e_i\wedge e_j)   \nonumber\\
        =\ & \sum_{i, j = 1}^n \left(\dfrac{d}{dt}d\eta_{ji}\right)\otimes(e_i\wedge e_j) - \sum_{i,j,k = 1}^n\left(\dfrac{d\eta_{ji}}{dt}\wedge\eta_{ki}\right)\otimes(e_k\wedge e_j) + \sum_{i,j,k = 1}^n\left(\dfrac{d\eta_{ji}}{dt}\wedge\eta_{kj}\right)\otimes(e_i\wedge e_k)   \nonumber\\
        =\ & \sum_{i, j = 1}^n \dfrac{d}{dt}(d\eta_{ji} + \eta_{jk}\wedge\eta_{ki})\otimes(e_i\wedge e_j)  \text{\ \ \ \ (This line is guaranteed by $\eta_{ik} = -\eta_{ki}$.)} \nonumber\\
        =\ & 2\dfrac{dQ_{{\mathbb{A}}_t}}{dt} - 2{\omega}\wedge Z_{{\mathbb{A}_t}}. 
    \end{align}
    The proof of (\ref{transgression first}) is complete. Next, we let 
    \begin{align}
        \Phi e_j = \varphi_{ij}e_i.
    \end{align}
    Here, $\varphi_{ij}$ is smooth in terms of $t$, and the notation $t$ is also omitted. We find that 
    \begin{align}\label{1-1}
          & 2\dfrac{dS_{\mathbb{A}_t}}{dt}   \nonumber\\
         =\ & -\sum_{i, j = 1}^n \dfrac{d}{dt}(d\varphi_{ji})\otimes(e_i\wedge e_j) + \sum_{i, j, k = 1}^n \dfrac{d}{dt}\left(\varphi_{jk}\eta_{ki} - \varphi_{ki}\eta_{jk}\right)\otimes(e_i\wedge e_j).
    \end{align}
    Also, we have
    \begin{align}\label{1-2}
          & 2{\Phi}_t^\Lambda Y_{{\mathbb{A}_t}}  \nonumber\\
        =\ & \Phi^\Lambda_t\left(\sum_{1\leqslant i, j\leqslant n}\dfrac{d\eta_{ji}}{dt}\otimes(e_i\wedge e_j)\right)    \nonumber\\
        =\ & \sum_{i, j, k = 1}^n \left(\varphi_{ik}\dfrac{d\eta_{jk}}{dt} + \varphi_{jk}\dfrac{d\eta_{ki}}{dt}\right)\otimes(e_i\wedge e_j)
    \end{align}
    and 
    \begin{align}\label{2-1}
         & 2\nabla_t Z_{\mathbb{A}_t}  \nonumber\\
        =\ & \nabla_t\left(\sum_{i, j = 1}^n \dfrac{d\varphi_{ji}}{dt}(e_i\wedge e_j)\right)     \nonumber\\
        =\ & \sum_{i, j = 1}^n \dfrac{d}{dt}(d\varphi_{ji})\otimes(e_i\wedge e_j) + \sum_{i,j,k = 1}^n\left(\dfrac{d\varphi_{jk}}{dt}\eta_{ik} + \dfrac{d\varphi_{ki}}{dt}\eta_{jk}\right)\otimes (e_i\wedge e_j).
    \end{align}
    Using (\ref{1-1}), (\ref{1-2}), (\ref{2-1}), $\varphi_{ik} = -\varphi_{ki}$, and $\eta_{ik} = -\eta_{ki}$, we obtain (\ref{transgression second}). 
\end{proof}

Now, like \cite[Proposition 1.54]{bgv}, we prove the transgression formula: 
\begin{proposition}\label{proposition of type 1 transgression}
    For the family $\mathbb{A}_t$, we have
    \begin{align}\label{transgression type 1}
        \dfrac{d\hspace{+0.5mm}\mathcal{U}_t}{dt} = d^{\widetilde{\omega}}\left((-1)^{n(n+1)/2}\left(\dfrac{1}{2\pi}\right)^{n/2}\int^B \left(Y_{\widetilde{\mathbb{A}}_t}, Z_{\widetilde{\mathbb{A}}_t}\right)\wedge e^{-\mathcal{A}_t}\right).
    \end{align}
\end{proposition}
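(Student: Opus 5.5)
The plan is to follow the argument of \cite[Proposition 1.54]{bgv}, using Corollary \ref{swap d and connection in berezin integral} to move $d^{\widetilde{\omega}}$ inside the Berezin integral. We then combine the closedness identity $(\widetilde{\mathbb{A}}_t+\mathbf{v}\lrcorner)\mathcal{A}_t=(0,0)$, established in the proof of Theorem \ref{main result}, with the previous proposition, pulled back to $E$: $\widetilde{\mathbb{A}}_t(Y_{\widetilde{\mathbb{A}}_t},Z_{\widetilde{\mathbb{A}}_t})=(dQ_{\widetilde{\mathbb{A}}_t}/dt,\,dS_{\widetilde{\mathbb{A}}_t}/dt)$.

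First I compute $d\mathcal{A}_t/dt$. Since $\omega$, $g$ and $\mathbf{v}$ are fixed, the term $\frac12|\mathbf{v}|^2$ does not vary. Moreover
\[
\frac{d}{dt}\widetilde{\mathbb{A}}_t(\mathbf{v},0)=\Bigl(\frac{d\widetilde{\nabla}_t}{dt}\mathbf{v},\ \frac{d\widetilde{\Phi}_t}{dt}\mathbf{v}\Bigr).
\]
A direct frame computation, as in (\ref{c4})–(\ref{c5}), should give
\[
\mathbf{v}\lrcorner(Y_{\widetilde{\mathbb{A}}_t},Z_{\widetilde{\mathbb{A}}_t})=-\frac{d}{dt}\widetilde{\mathbb{A}}_t(\mathbf{v},0),
\]
with the sign fixed by (\ref{original contraction}), (\ref{extended contraction}) and the skew-adjointness of $d\nabla_t/dt$ and $d\Phi_t/dt$. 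Combining this with the previous proposition yields
\[
\frac{d\mathcal{A}_t}{dt}=-(\widetilde{\mathbb{A}}_t+\mathbf{v}\lrcorner)(Y_{\widetilde{\mathbb{A}}_t},Z_{\widetilde{\mathbb{A}}_t}).
\]

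Next I apply the operator to the product. Write $\mathcal{Y}=(Y_{\widetilde{\mathbb{A}}_t},Z_{\widetilde{\mathbb{A}}_t})$. This pair has total even parity, since it is a $1$-form tensored with $\Lambda^2$ in the first slot and a $0$-form tensored with $\Lambda^2$ in the second. The operator $\widetilde{\mathbb{A}}_t+\mathbf{v}\lrcorner$ is an odd derivation for the pair wedge product (\ref{define the wedge product for pairs}); this is the same Leibniz property used in the proof of Theorem \ref{main result} for $\mathbf{v}\lrcorner$, and it must also be checked for $\widetilde{\mathbb{A}}_t$. Since it kills $\mathcal{A}_t$, it also kills $e^{-\mathcal{A}_t}$. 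Therefore
\[
(\widetilde{\mathbb{A}}_t+\mathbf{v}\lrcorner)\bigl(\mathcal{Y}\wedge e^{-\mathcal{A}_t}\bigr)=\bigl((\widetilde{\mathbb{A}}_t+\mathbf{v}\lrcorner)\mathcal{Y}\bigr)\wedge e^{-\mathcal{A}_t}=-\frac{d\mathcal{A}_t}{dt}\wedge e^{-\mathcal{A}_t}.
\]
The elements $\mathcal{A}_t$ and $d\mathcal{A}_t/dt$ are both even, so they commute under the pair product. Hence
\[
\frac{d}{dt}e^{-\mathcal{A}_t}=-\frac{d\mathcal{A}_t}{dt}\wedge e^{-\mathcal{A}_t}.
\]
Applying $\int^B$, multiplying by the constant $(-1)^{n(n+1)/2}(2\pi)^{-n/2}$, and using Corollary \ref{swap d and connection in berezin integral} gives (\ref{transgression type 1}).

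The main obstacle is sign and commutativity bookkeeping for the pair product (\ref{define the wedge product for pairs}), which is not graded-commutative in the naive sense. Concretely, three things must be verified:
\begin{itemize}
\item that $\widetilde{\mathbb{A}}_t$ satisfies a Leibniz rule with respect to this product;
\item that even pairs such as $\mathcal{A}_t$ and $d\mathcal{A}_t/dt$ commute, which is needed to differentiate the exponential;
\item the sign in the contraction identity for $\mathcal{Y}$.
\end{itemize}
To check these I would first verify them on decomposable elements $(\alpha\otimes x,\beta\otimes y)$ and track the second component, which involves $\omega$ and $\Phi^\Lambda$, explicitly. If a sign turns out wrong, I would adjust it by the convention in (\ref{extended contraction}).
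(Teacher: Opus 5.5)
Your route is the paper's route. The paper's proof also derives $\frac{d\mathcal{A}_t}{dt}=-(\widetilde{\mathbb{A}}_t+\mathbf{v}\lrcorner)(Y_{\widetilde{\mathbb{A}}_t},Z_{\widetilde{\mathbb{A}}_t})$ from the preceding proposition, combines it with $(\widetilde{\mathbb{A}}_t+\mathbf{v}\lrcorner)\mathcal{A}_t=(0,0)$, and finishes with Corollary~\ref{swap d and connection in berezin integral}. Your identity $\mathbf{v}\lrcorner(Y_{\widetilde{\mathbb{A}}_t},Z_{\widetilde{\mathbb{A}}_t})=-\frac{d}{dt}\widetilde{\mathbb{A}}_t(\mathbf{v},0)$ is correct.

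The gap is exactly the check you deferred, and it fails for the product (\ref{define the wedge product for pairs}) as printed: $\widetilde{\mathbb{A}}_t$ is \emph{not} a derivation for it, and even pairs need not commute.
\begin{itemize}
\item On $\Lambda^0$-valued pairs we have $\widetilde{\mathbb{A}}_t=d^{\widetilde{\omega}}$. Take $X=(\alpha,0)$ with $\alpha$ a $1$-form, $d\alpha\neq0$, and $Y=(0,1)$. Then $X\wedge Y=(0,\alpha)$ and $d^{\widetilde{\omega}}(X\wedge Y)=(\widetilde{\omega}\wedge\alpha,-d\alpha)$. On the other hand $d^{\widetilde{\omega}}X\wedge Y=(0,d\alpha)$ and $X\wedge d^{\widetilde{\omega}}Y=(\alpha\wedge\widetilde{\omega},0)$, so no sign $s$ gives $d^{\widetilde{\omega}}(X\wedge Y)=d^{\widetilde{\omega}}X\wedge Y+s\,X\wedge d^{\widetilde{\omega}}Y$.
\item Locally, $(dy_1\otimes\widetilde{e}_1,0)$ and $(0,\widetilde{e}_2)$ are both even in your sense, yet they anticommute.
\end{itemize}
This is not cosmetic. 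Take $E=M\times\mathbb{R}^2$ with the trivial connection and $\Phi_t=t\varphi J$, where $Je_1=e_2$, $Je_2=-e_1$ and $\varphi\in C^\infty(M)$. With the printed product, the second component of $\int^B e^{-\mathcal{A}_t}$ is $-t\,e^{-|\mathbf{v}|^2/2}d\varphi$. Hence the second component of $d\mathcal{U}_t/dt$ is a nonzero multiple of $e^{-|\mathbf{v}|^2/2}d\varphi$, which is not $d$-closed when $d\varphi\neq0$. Every second component of $d^{\widetilde{\omega}}(\psi_t,\rho_t)$ is exact, so the identity fails as printed. The paper's proof asserts the same Leibniz step without comment and has the same hole; the example also shows $\mathcal{U}_t$ itself is not closed under the printed product.

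The missing ingredient is a sign in the product. The cross term must read $(-1)^{|\alpha|+|x|(|\delta|+1)}(\alpha\wedge\delta)\otimes(x\wedge v)$. This is the product you get by writing $(\alpha,\beta)=\alpha+\tau\beta$, with $\tau$ odd, $\tau^2=0$, and $\tau$ supercommuting with $\Omega^*(E,\Lambda^*\widetilde{E})$ in total (form plus exterior) degree. For this product everything you need holds:
\begin{itemize}
\item $\widetilde{\mathbb{A}}_t=\widetilde{\nabla}_t+\tau\widetilde{\Phi}_t^\Lambda$ (with $\widetilde{\nabla}_t\tau=\widetilde{\omega}$) and $\mathbf{v}\lrcorner$ (with $\mathbf{v}\lrcorner\tau=0$) are odd derivations, and they reproduce the formulas for $\widetilde{\mathbb{A}}_t$ and (\ref{extended contraction}).
\item Even elements are central, so $\frac{d}{dt}e^{-\mathcal{A}_t}=-\frac{d\mathcal{A}_t}{dt}\wedge e^{-\mathcal{A}_t}$.
\end{itemize}
Your argument then goes through verbatim; in the example the second component becomes $-t\,d(\varphi e^{-|\mathbf{v}|^2/2})$. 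Your proposed remedy of adjusting the contraction convention (\ref{extended contraction}) cannot fix this. That convention is already right, and $\mathbf{v}\lrcorner$ satisfies the Leibniz rule even for the printed product.

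A minor slip: $(Y_{\widetilde{\mathbb{A}}_t},Z_{\widetilde{\mathbb{A}}_t})$ is odd, not even, because $Y_{\widetilde{\mathbb{A}}_t}$ is a $1$-form with values in $\Lambda^2$. It must be odd, since the odd operator $\widetilde{\mathbb{A}}_t+\mathbf{v}\lrcorner$ sends it to the even element $-d\mathcal{A}_t/dt$. This is harmless, because the Leibniz term involving $(\widetilde{\mathbb{A}}_t+\mathbf{v}\lrcorner)e^{-\mathcal{A}_t}$ vanishes whatever its sign.
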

\begin{proof}
For the covariant derivative $\widetilde{\mathbb{A}}_t$ on $\widetilde{E}$, we have 
\begin{align}
    Y_{\widetilde{\mathbb{A}}_t} = \sum_{1\leqslant i<j\leqslant n}\widetilde{g}\left(\dfrac{d\widetilde{\nabla}_t}{dt}\widetilde{e}_i, \widetilde{e}_j\right)\otimes(\widetilde{e}_i\wedge \widetilde{e}_j)
\end{align}
and 
\begin{align}
    Z_{\widetilde{\mathbb{A}}_t} = \sum_{1\leqslant i<j\leqslant n}\widetilde{g}\left(\dfrac{d\widetilde{\Phi}_t}{dt}\widetilde{e}_i, \widetilde{e}_j\right)\otimes(\widetilde{e}_i\wedge \widetilde{e}_j). 
\end{align}
    Using the expression 
    \begin{align}
    \mathcal{A}_t = \left(\dfrac{1}{2}|\mathbf{v}|^2, 0\right) + \widetilde{\mathbb{A}}_t(\mathbf{v},0) - \left(Q_{\widetilde{\mathbb{A}}_t}, S_{\widetilde{\mathbb{A}}_t}\right)
\end{align}
together with (\ref{connection on connection identified forms}), we see that 
\begin{align}\label{key point almost like the bianchi identity}
    \dfrac{d\mathcal{A}_t}{dt} =\ & (0,0) -\mathbf{v}\lrcorner\left(Y_{\widetilde{\mathbb{A}}_t}, Z_{\widetilde{\mathbb{A}}_t}\right) -\widetilde{\mathbb{A}}_t\left(Y_{\widetilde{\mathbb{A}}_t}, Z_{\widetilde{\mathbb{A}}_t}\right)   \nonumber\\
    =\ & -(\widetilde{\mathbb{A}}_t+\mathbf{v}\lrcorner)\left(Y_{\widetilde{\mathbb{A}}_t}, Z_{\widetilde{\mathbb{A}}_t}\right).
\end{align}
By (\ref{key point almost like the bianchi identity}) and 
\begin{align}
    (\widetilde{\mathbb{A}}_t+\mathbf{v}\lrcorner)\mathcal{A}_t = (0,0),
\end{align}
we find  
\begin{align}
    \dfrac{d\hspace{+0.5mm}\mathcal{U}_t}{dt} 
    =\ & (-1)^{n(n+1)/2}\left(\dfrac{1}{2\pi}\right)^{n/2}\int^B -\dfrac{d\mathcal{A}_t}{dt}\wedge e^{-\mathcal{A}_t}   \nonumber\\
    =\ & (-1)^{n(n+1)/2}\left(\dfrac{1}{2\pi}\right)^{n/2}\int^B (\widetilde{\mathbb{A}}_t+\mathbf{v}\lrcorner)\left(\left(Y_{\widetilde{\mathbb{A}}_t}, Z_{\widetilde{\mathbb{A}}_t}\right)\wedge e^{-\mathcal{A}_t}\right)  \nonumber\\
    =\ & d^{\widetilde{\omega}}\left((-1)^{n(n+1)/2}\left(\dfrac{1}{2\pi}\right)^{n/2}\int^B \left(Y_{\widetilde{\mathbb{A}}_t}, Z_{\widetilde{\mathbb{A}}_t}\right)\wedge e^{-\mathcal{A}_t}\right).
\end{align}
The last equality is by Corollary \ref{swap d and connection in berezin integral}. The proof of Theorem \ref{main result 2} is complete. 
\end{proof}

We end this paper by presenting some inspirations related to this study: 

There should also be another type of transgression (cf. \cite[Proposition 1.53]{bgv} and \cite[Proposition 3.5]{wittendeformationweipingzhang}). In that type, $\mathbf{v}$ is rescaled to $t\mathbf{v}$, while $g$, $\omega$, $\nabla$, and $\Phi$ are unchanged. The readers may expect, when $\dim M$ is even, that type of transgression together with a nondegenerate vector field might bring a mapping cone analogue of the Gauss-Bonnet-Chern theorem. However, this is impossible. For example, when $M$ is closed and $\omega$ is a symplectic form whose de Rham cohomology class is integral, the de Rham mapping cone cochain complex computes the cohomology of a circle bundle $C$ over $M$. The $\dim C$ is odd, and
\begin{align}
    (0,1)\in\Omega^1(M)\oplus\Omega^0(M)
\end{align}
corresponds to a nonvanishing $1$-form $\theta\in\Omega^1(C)$. Intuitively speaking, for any nondegenerate vector field $X$ on $M$, when we use $X$ to pull back $\mathcal{U}$, the zeros of $X$ behave like isolated circles on $C$ instead of like isolated points on $M$. This means that the analysis \cite[(3.26)-(3.28)]{wittendeformationweipingzhang} around isolated zeros cannot be directly transferred to $X^*\mathcal{U}$ in this paper. 

Such a feature also explains why in spirit, the mapping cone Morse theory \cite{tangtsengclausenmappingcone, tangtsengclausensymplecticwitten, 2026_instanton_construction_mapping_cone} is kind of like Morse-Bott theory, no matter whether $\omega$ is symplectic or not. In addition, as supported by the transgression \ref{transgression type 1}, we can let $\Phi = 0$ if we need a simplified representative of the mapping cone Thom class. Thus, the main difficulty is the analysis on $\omega$ when studying the analytic side of the mapping cone Morse theory. See \cite{tangtsengclausensymplecticwitten, 2026_instanton_construction_mapping_cone} for more details.

Last but not least, when $M$ is a smooth closed oriented manifold, the Euler characteristic of the de Rham mapping cone cochain complex is determined only by the degree of $\omega$. It is equal to $0$ when the degree of $\omega$ is an even number. See \cite[Proposition 1.6]{brown1982complete} and \cite[Theorem 1.3]{tangtsengclausenmappingcone} for precise formulas. 
\bibliographystyle{abbrv}
\bibliography{mybib.bib}

@book{wittendeformationweipingzhang,
  title={Lectures on Chern-Weil Theory and Witten Deformations},
  author={Zhang, W.},
  isbn={9789812386588},
  year={2001},
  publisher={World Scientific}
}

@book{bott2013differential,
  title={Differential Forms in Algebraic Topology},
  author={Bott, R. and Tu, L. W.},
  isbn={9781475739510},
  lccn={81009172},
  year={1982},
  publisher={Springer}
}

@book{weipingzhangnewedition,
author = {Weiping Zhang and Huitao Feng},
 title = {Geometry and Analysis on Manifolds},
year = {2022},
edition = {{C}hinese},
publisher = {Higher Education Press}
}

@article{tanaka_tseng_2018,
    author = {Hiro Lee Tanaka and Li-Sheng Tseng},
    title = {Odd sphere bundles, symplectic manifolds, and their intersection theory},
    journal = {Cambridge Journal of Mathematics},
    volume = {6},
    number = {3},
    year = {2018},
    pages = {213--266},
    doi = {10.4310/CJM.2018.v6.n3.a1},
    issn = {2168-0930},
    publisher = {International Press of Boston}
}

@article{tty1st,
author = {Li-Sheng Tseng and Shing-Tung Yau},
title = {{Cohomology and Hodge theory on symplectic manifolds: I}},
volume = {91},
journal = {Journal of Differential Geometry},
number = {3},
publisher = {Lehigh University},
pages = {383--416},
year = {2012},
doi = {10.4310/jdg/1349292670},
URL = {https://doi.org/10.4310/jdg/1349292670}
}

@article{tty2nd,
author = {Li-Sheng Tseng and Shing-Tung Yau},
title = {{Cohomology and Hodge theory on symplectic manifolds: II}},
volume = {91},
journal = {Journal of Differential Geometry},
number = {3},
publisher = {Lehigh University},
pages = {417--443},
year = {2012},
doi = {10.4310/jdg/1349292671},
URL = {https://doi.org/10.4310/jdg/1349292671}
}

@article{tty3rd,
author = {Chung-Jun Tsai and Li-Sheng Tseng and Shing-Tung Yau},
title = {{Cohomology and Hodge theory on symplectic manifolds: III}},
volume = {103},
journal = {Journal of Differential Geometry},
number = {1},
publisher = {Lehigh University},
pages = {83--143},
year = {2016},
doi = {10.4310/jdg/1460463564},
URL = {https://doi.org/10.4310/jdg/1460463564}
}

@article{tangtsengclausensymplecticwitten,
      title={Symplectic {M}orse theory and {W}itten deformation}, 
      author={David Clausen and Xiang Tang and Li-Sheng Tseng},
      journal = {Mathematische Zeitschrift}, 
      volume={312},
  number={3},
  pages={94},
  year={2026}
}

@book{bgv,
  title={Heat Kernels and Dirac Operators},
  author={Berline, N. and Getzler, E. and Vergne, M.},
  year={2004},
  publisher={Springer}
}

@article{tseng_and_zhou_2025mapping_yang_mills,
  title={Mapping Cone Connections and their {Yang--Mills functional}},
  author={Tseng, Li-Sheng and Zhou, Jiawei},
  journal={Communications in Mathematical Physics},
  volume={406},
  number={7},
  pages={156},
  year={2025},
  publisher={Springer}
}

@article{tseng_and_zhou_symplectic_flat_connection_and_twisted_primitive2022,
  title={Symplectic flatness and twisted primitive cohomology},
  author={Tseng, Li-Sheng and Zhou, Jiawei},
  journal={The Journal of Geometric Analysis},
  volume={32},
  number={11},
  pages={282},
  year={2022},
  publisher={Springer}
}

@misc{symplectic_semi_char_2025,
      title={Symplectic semi-characteristics}, 
      author={Hao Zhuang},
      year={2025},
      eprint={2505.14496v1},
      archivePrefix={arXiv},
      primaryClass={math.DG},
      url={https://arxiv.org/abs/2505.14496}, 
      note = {ar{X}iv:2505.14496}
}

@misc{1_filtered_semi_char_2025,
      title={A vanishing property about the $1$-filtered cohomology groups of (4n+2)-dimensional closed symplectic manifolds}, 
      author={Hao Zhuang},
      year={2025},
      eprint={2510.10630v1},
      archivePrefix={arXiv},
      primaryClass={math.SG},
      url={https://arxiv.org/abs/2510.10630}, 
      note = {ar{X}iv:2510.10630}
}

@article{tangtsengclausenmappingcone,
      title={Mapping cone and {M}orse theory}, 
      author={David Clausen and Xiang Tang and Li-Sheng Tseng},
      Volume = {14}, 
      Number = {1}, 
      pages = {117--154}, 
      year = {2026},
      journal = {Cambridge Journal of Mathematics}
}

@misc{tseng_zhou_symplectic_flat_functional_characteristic_classes2022,
  title={Symplectically flat connections and their functionals},
  author={Tseng, Li-Sheng and Zhou, Jiawei},
  year={2022},
  note = {ar{X}iv:2210.03032v5}
}

@misc{transgression_primitive_2025,
      title={Transgression in the primitive cohomology}, 
      author={Hao Zhuang},
      year={2025},
      eprint={2512.24920},
      archivePrefix={arXiv},
      primaryClass={math.DG},
      url={https://arxiv.org/abs/2512.24920}, 
      note = {arXiv:2512.24920}
}

@article{Mathai-Quillen_Thom_form,
title = {Superconnections, {T}hom classes, and equivariant differential forms},
journal = {Topology},
volume = {25},
number = {1},
pages = {85-110},
year = {1986},
issn = {0040-9383},
doi = {https://doi.org/10.1016/0040-9383(86)90007-8},
url = {https://www.sciencedirect.com/science/article/pii/0040938386900078},
author = {Varghese Mathai and Daniel Quillen}
}

@book{bourbaki_algebra_1,
  title={Algebra I: Chapters 1-3},
  author={Bourbaki, N.},
  isbn={9783540642435},
  lccn={88031211},
  url={https://books.google.com/books?id=STS9aZ6F204C},
  year={1989},
  publisher={Springer}
}

@article{brown1982complete,
  title={Complete {E}uler characteristics and fixed-point theory},
  author={Brown, Kenneth S},
  journal={Journal of Pure and Applied Algebra},
  volume={24},
  number={2},
  pages={103--121},
  year={1982}
}

@phdthesis{morrison2024cohomology,
  title={Cohomology of Symplectic Manifolds},
  author={Morrison, Daniel},
  year={2024},
  school={University of California, Irvine}
}

@book{loringtu2017differential,
  title={Differential Geometry: Connections, Curvature, and Characteristic Classes},
  author={Tu, L. W.},
  isbn={9783319550848},
  url={https://books.google.com/books?id=bmsmDwAAQBAJ},
  year={2017},
  publisher={Springer}
}

@misc{2026_instanton_construction_mapping_cone,
      title={Instanton construction of the mapping cone {Thom-Smale} complex}, 
      author={Hao Zhuang},
      year={2026},
      eprint={2603.08404},
      archivePrefix={arXiv},
      primaryClass={math.DG},
      url={https://arxiv.org/abs/2603.08404}, 
      note = {arXiv:2603.08404}
}
\end{document}